\pdfoutput=1
\documentclass[bj]{imsart}

\RequirePackage{amsthm,amsmath,amsfonts,amssymb}
\RequirePackage[numbers]{natbib}
\RequirePackage{hyperref}
\hypersetup{colorlinks,citecolor=blue,urlcolor=blue,dvipdfm}
\RequirePackage[dvipdfmx]{graphicx}
\RequirePackage{mathtools}
\RequirePackage{wrapfig}
\RequirePackage{algorithm}
\RequirePackage{algorithmic}
\mathtoolsset{showonlyrefs=true}
\startlocaldefs

\newtheorem{Theorem}{Theorem}[section]
\newtheorem{Lemma}[Theorem]{Lemma}
\newtheorem{Corollary}{Corollary}[Theorem]
\theoremstyle{remark}
\newtheorem{Definition}[Theorem]{Definition}


\endlocaldefs


\newcommand{\Ent}{\mathrm{Ent}}

\newcommand{\sm}{\scalebox{0.6}{\rm T}}

\begin{document}

\begin{frontmatter}
\title{Entropy-regularized optimal transport on multivariate normal and $q$-normal distributions}
\runtitle{}

\begin{aug}
\author[A]{\fnms{QIJUN} \snm{TONG}\ead[label=e1]{tong.qujin@gmail.com}}
\and
\author[B]{\fnms{KEI} \snm{KOBAYASHI}\ead[label=e2]{kei@math.keio.ac.jp}}
\address[A]{Department of Mathematics, Faculty of Science and Technology,
   Keio University, Yokohama, Japan.
\printead{e1}}
\address[B]{Department of Mathematics, Faculty of Science and Technology,
   Keio University, Yokohama, Japan.
\printead{e2}}
\end{aug}
\begin{abstract}
The distance and divergence of the probability measures play a central role in statistics, machine learning, and many other related fields.
The Wasserstein distance has received much attention in recent years because of its distinctions from other distances or divergences.
Although~computing the Wasserstein distance is costly, entropy-regularized optimal transport was proposed to computationally efficiently approximate the Wasserstein distance. The purpose of this study is to understand the theoretical aspect of entropy-regularized optimal transport.
In this paper, we~focus on entropy-regularized optimal transport on multivariate normal distributions and \mbox{$q$-normal} distributions. We~obtain the explicit form of the entropy-regularized optimal transport cost on multivariate normal and $q$-normal distributions; this provides a perspective to understand the effect of entropy regularization, which was previously known only experimentally. 
Furthermore, we obtain the entropy-regularized Kantorovich estimator for the probability measure that satisfies certain conditions. We also demonstrate how the Wasserstein distance, optimal coupling, geometric structure, and statistical efficiency are affected by entropy regularization in some experiments.
In particular, our results about the explicit form of the optimal coupling of the Tsallis entropy-regularized optimal transport on multivariate $q$-normal distributions and the entropy-regularized Kantorovich estimator are novel and will become the first step towards the understanding of a more general setting.
\end{abstract}
\begin{keyword}
\kwd{Optimal trainsport}
\kwd{Wasserstein distance}
\kwd{Entropy regularization}
\kwd{Tsallis entropy}
\kwd{$q$-normal distribution}
\end{keyword}

\end{frontmatter}

\section{Introduction}
Comparing probability measures is a fundamental problem in statistics and machine learning. A classical way to compare probability measures is the Kullback--Leibler divergence. Let $M$ be a measurable space and $\mu,\nu$ be the probability measure on $M$; then,~the~Kullback--Leibler divergence is defined as:
\begin{equation}
\mathrm{KL}(\mu|\nu)=\int_{M} d\mu\log \frac{d\mu}{d\nu}.
\end{equation}

The Wasserstein distance \cite{villani}, also known as the earth mover distance \cite{emd}, is another way of comparing probability measures. It is a metric on the space of probability measures derived by the mass transportation theory of two probability measures. Informally,~optimal~transport theory considers an optimal transport plan between two probability measures under a cost function, and the Wasserstein distance is defined by the minimum total transport cost. A significant difference between the Wasserstein distance and the Kullback--Leibler divergence is that the former can reflect the metric structure, whereas the latter cannot. The Wasserstein distance can be written as:
\begin{equation}
W_{p}(\mu,\nu):=\left\{\inf_{\pi\in\Pi(\mu,\nu)}\int_{M\times M}d(x,y)^{p}d\pi(x,y)\right\}^{\frac{1}{p}},
\end{equation}
where $d(\cdot,\cdot)$ is a distance function on a measurable metric space $M$ and $\Pi(\mu,\nu)$ denotes the set of probability measures on $M\times M$, whose marginal measures correspond to $\mu$ and $\nu$.
In recent years, the application of optimal transport and the Wasserstein distance has been studied in many fields such as statistics, machine learning, and image processing. For example, Reference \cite{conv} generated the interpolation of various three-dimensional (3D) objects using the Wasserstein barycenter. In the field of word embedding in natural language processing, Reference \cite{elipap} embedded each word as an elliptical distribution, and~the Wasserstein distance was applied between the elliptical distributions. There are many studies on the applications of optimal transport to deep learning, including \cite{inception,wgan,nitanda}. Moreover, Reference \cite{sonoda} analyzed the denoising autoencoder \cite{dae} with gradient flow in the Wasserstein space.

In the application of the Wasserstein distance, it is often considered in a discrete setting where $\mu$ and $\nu$ are discrete probability measures. Then, obtaining the Wasserstein distance between $\mu$ and $\nu$ can be formulated as a linear programming problem. In general, however, it is computationally intensive to solve such linear problems and obtain the optimal coupling of two probability measures.
For such a situation, a novel numerical method, entropy regularization, was proposed by \cite{cuturi},
\begin{equation}
C_{\lambda}(\mu,\nu):=\inf_{\pi\in\Pi(\mu,\nu)}\int_{\mathbb{R}^{n}\times \mathbb{R}^{n}}c(x,y)\pi({x},{y})d{x}d{y}-\lambda \mathrm{Ent}(\pi).
\end{equation}

This is a relaxed formulation of the original optimal transport of a cost function $c(\cdot,\cdot)$, in which the negative Shannon entropy $-\Ent(\cdot)$ is used as a regularizer. For a small $\lambda$, $C_{\lambda}(\mu,\nu)$ can approximate the $p$-th power of the Wasserstein distance between two discrete probability measures, and it can be computed efficiently by using Sinkhorn's algorithm~\cite{sink}. 

More recently, many studies have been published on improving the computational efficiency.
According to \cite{lin1}, the most computationally efficient algorithm at this moment to solve the linear problem for the Wasserstein distance is Lee--Sidford linear solver \cite{linear_prog}, which~runs in $O(n^{2.5})$. Reference \cite{dvurechensky} proved that a complexity bound for the Sinkhorn algorithm is $\tilde{O}(n^2\varepsilon^{-2})$, where $\varepsilon$ is the desired absolute performance guarantee. After~\cite{cuturi} appeared, various algorithms have been proposed. For example, Reference \cite{stosink} adopted stochastic optimization schemes for solving the optimal transport. The Greenkhorn algorithm \cite{greedhorn} is the greedy variant of the Sinkhorn algorithm, and Reference \cite{lin1} proposed its acceleration. Many other approaches such as adapting a variety of standard optimization algorithms to approximate the optimal transport problem can be found in~\cite{blondelsmooth,cuturismoothed,lin1,lin2}. Several~specialized Newton-type algorithms \cite{much,cohen2017matrix} achieve complexity bound $\tilde{O}(n^2\varepsilon^{-1})$~ \cite{blanchet2018towards,quanrud2018approximating}, which are the best ones in terms of computational complexity at the present moment.

Moreover, entropy-regularized optimal transport has another advantage. Because of the differentiability of the entropy-regularized optimal transport and the simple structure of Sinkhorn's algorithm, we can easily compute the gradient of the entropy-regularized optimal transport cost and optimize the parameter of a parametrized probability distribution by using numerical differentiation or automatic differentiation. Then, we can define a differentiable loss function that can be applied to various supervised learning methods \cite{wloss}. Entropy-regularized optimal transport can be used to approximate not only the Wasserstein distance, but also its optimal coupling as a mapping function. Reference \cite{domain} adopted the optimal coupling of the entropy-regularized optimal transport as a mapping function from one domain to another.

Despite the empirical success of the entropy-regularized optimal transport, its theoretical aspect is less understood. Reference \cite{beru} studied the expected Wasserstein distance between a probability measure and its empirical version. Similarly, Reference \cite{dual} showed the consistency of the entropy-regularized optimal transport cost between two empirical distributions. Reference \cite{mld} showed that minimizing the entropy-regularized optimal transport cost between empirical distributions is equivalent to a type of maximum likelihood estimator. Reference \cite{VAE} considered Wasserstein generative adversarial networks with an entropy regularization.
Reference \cite{amari_was} constructed information geometry from the convexity of the entropy-regularized optimal transport cost.

Our intrinsic motivation of this study is to produce an analytical solution about the entropy-regularized optimal transport problem between continuous probability measures so that we can gain insight into the effects of entropy regularization in a theoretical, as~ well as an experimental way. In our study, we generalized the Wasserstein distance between two multivariate normal distributions by entropy regularization. We derived the explicit form of the entropy-regularized optimal transport cost and its optimal coupling, which can be used to analyze the effect of entropy regularization directly.
In general, the~nonregularized Wasserstein distance between two probability measures and its optimal coupling cannot be expressed in a closed form; however, Reference \cite{frec} proved the explicit formula for multivariate normal distributions. Theorem \ref{main} is a generalized form of \cite{frec}. We obtain an explicit form of the entropy-regularized optimal transport between two multivariate normal distributions. Furthermore, by adopting the Tsallis entropy \cite{tsallis} as the entropy regularization instead of the Shannon entropy, our theorem can be generalized to multivariate $q$-normal distributions.


Some readers may find it strange to study the entropy-regularized optimal transport for multivariate normal distributions,
where the exact (nonregularized) optimal transport has been obtained explicitly.
However, we think it is worth studying from several perspectives:
\begin{itemize}
\item Normal distributions are the simplest and best-studied probability distributions, and thus, it is useful to examine the regularization theoretically in order to infer results for other distributions.
In particular, we will partly answer the questions ``How much do entropy constraints affect the results?'' and ``What does it mean to constrain by the entropy?'’ for the simplest cases.
Furthermore, as a first step in constructing a theory for more general probability distributions, in Section \ref{Extension to Tsallis entropy regularization}, we propose a generalization to multivariate $q$-normal distributions.
\item Because normal distributions are the limit distributions in asymptotic theories using the central limit theorem, studying normal distributions is necessary for the asymptotic theory of regularized Wasserstein distances and estimators computed by them. Moreover, it was proposed to use the entropy-regularized Wasserstein distance to compute a lower bound of the generalization error for a variational autoencoder~\cite{VAE}. The~study of the asymptotic behavior of such bounds is one of the expected applications of our results.
\item Though this has not yet been proven theoretically, we suspect that entropy regularization is efficient not only for computational reasons, such as the use of the Sinkhorn algorithm, but also in the sense of efficiency in statistical inference. Such a phenomenon can be found in some existing studies, including \cite{amari_new}. Such statistical efficiency is confirmed by some experiments in Section \ref{Numerical Experiments}.
\end{itemize}

The remainder of this paper is organized as follows. First, we review some definitions of optimal transport and entropy regularization in Section \ref{Preliminary}. Then, in Section \ref{Entropy-Regularized Optimal Transport between Multivariate Normal Distributions}, we provide an explicit form of the entropy-regularized optimal transport cost and its optimal coupling between two multivariate normal distributions. We also extend this result to $q$-normal distributions for Tsallis entropy regularization in Section \ref{Extension to Tsallis entropy regularization}.
In Section \ref{Entropy-Regularized Kantorovich Estimator}, we obtain the entropy-regularized Kantorovich estimator of probability measures on $\mathbb{R}^{n}$ with a finite second moment that are absolutely continuous with respect to the Lebesgue measure in Theorem \ref{thm:mke}. We emphasize that Theorem \ref{thm:mke} is not limited to the case of multivariate normal distribution, but can handle a wider range of probability measures.
We analyze how entropy regularization affects the optimal result experimentally in certain sections.

We note that after publishing the preprint version of the paper, we found closely related results \cite{janati,malla} reported within half a year. In Janati et al. \cite{janati}, they proved the same result as Theorem \ref{main} based on solving the fixed-point equation behind Sinkhorn's algorithm. Their results include the unbalanced optimal transport between unbalanced multivariate normal distributions. They also studied the convexity and differentiability of the objective function of the entropy-regularized optimal transport. In \cite{malla}, the same closed-form as Theorem \ref{main} was proven by ingeniously using the Schr\"{o}dinger system.
Although there are some overlaps, our paper has significant novelty in the following respects. Our proof is more direct than theirs and can be extended directly to the proof for the Tsallis entropy-regularized optimal transport between multivariate $q$-normal distributions provided in Section \ref{Extension to Tsallis entropy regularization}. Furthermore, Corollaries \ref{cor:mono} and \ref{cor:1} are novel and important results to evaluate how much the entropy regularization affects the estimation results or not at all. We also obtain the entropy-regularized Kantorovich estimator in Theorem \ref{thm:mke}. 

\section{Preliminary}
\label{Preliminary}
In this section, we review some definitions of optimal transport and entropy-regularized optimal transport. These definitions were referred to in \cite{copcuturi,villani}.
In this section, we use a tuple $(M,\Sigma)$ for a set $M$ and $\sigma$-algebra on $M$ and $\mathcal{P}(X)$ for the set of all probability measures on a measurable space $X$.

\begin{Definition}[Pushforward measure]
Given measurable spaces $(M_{1},\Sigma_{1})$ and $(M_{2},\Sigma_{2})$, a~measure $\mu:\Sigma_{1}\rightarrow[0,+\infty]$, and a measurable mapping $\varphi: M_{1} \rightarrow M_{2}$, the pushforward measure of $\mu$ by $\varphi$ is defined by:
\begin{equation}
\forall B \in \Sigma_{2},\ \varphi _ { \# } \mu ( B ) : = \mu \left( \varphi ^ { - 1 } ( B ) \right) .
\end{equation}
\end{Definition}

\begin{Definition}[Optimal transport map]
Consider a measurable space $(M,\Sigma)$, and let $c:M\times M\rightarrow\mathbb{R}_{+}$ denote a cost function. Given $\mu,\nu\in\mathcal{P}(M)$, we call $\varphi:M\rightarrow M$ the optimal transport map if $\varphi$ realizes the infimum of:
\begin{equation}
\inf_{\varphi_{\#}\mu=\nu}\int _ {M} c ( x , \varphi ( x ) ) d \mu ( x ).
\end{equation}
\end{Definition}
This problem was originally formalized by \cite{monge}. However, the optimal transport map does not always exist. Then, Kantorovich considered a relaxation of this problem in \cite{kantorovich}.

\begin{Definition}[Coupling]
Given $\mu,\nu\in\mathcal{P}(M)$, the coupling of $\mu$ and $\nu$ is a probability measure on $M\times M$ that satisfies:

\begin{equation}
\forall A \in \Sigma ,\ \pi ( A \times M ) = \mu ( A ) ,\quad \pi ( M \times A ) = \nu ( A ).
\end{equation}

\end{Definition}
\begin{Definition}[Kantorovich problem]
The Kantorovich problem is defined as finding a coupling $\pi$ of $\mu$ and $\nu$ that realizes the infimum of:
\begin{equation}
\int _ { M \times M } c ( x , y ) d \pi ( x , y ).
\end{equation}
\end{Definition}
Hereafter, let $\Pi(\mu,\nu)$ be the set of all couplings of $\mu$ and $\nu$. 
When we adopt a distance function as the cost function, we can define the Wasserstein distance.
\begin{Definition}[Wasserstein distance]

Given $p\geq 1$, a measurable metric space $(M,\Sigma,d)$, and~$\mu,\nu\in \mathcal{P}(M)$ with a finite $p$-th moment, the $p$-Wasserstein distance between $\mu$ and $\nu$ is defined as:
\begin{equation}
W_{p}(\mu,\nu):=\inf_{\pi\in\Pi(\mu,\nu)}\left(\int_{M\times M}d(x,y)^{p}d\pi(x,y)\right)^{\frac{1}{p}}.
\end{equation}

\end{Definition}
Now, we review the definition of entropy-regularized optimal transport on $\mathbb{R}^n$.
\begin{Definition}[Entropy-regularized optimal transport]
Let $\mu,\nu\in \mathcal{P}(\mathbb{R}^{n})$, $\lambda>0$, and let $\pi(x,y)$ be the density function of the coupling of $\mu$ and $\nu$, whose reference measure is the Lebesgue measure. We define the entropy-regularized optimal transport cost as:
\begin{equation}
C_{\lambda}(\mu,\nu):=\inf_{\pi\in\Pi(\mu,\nu)}\int_{\mathbb{R}^{n}\times \mathbb{R}^{n}}c(x,y)\pi({x},{y})d{x}d{y}-\lambda \mathrm{Ent}(\pi),
\end{equation}
where $\Ent(\cdot)$ denotes the Shannon entropy of a probability measure:
\begin{equation}
\mathrm{Ent}(\pi)=-\int_{\mathbb{R}^{n}\times\mathbb{R}^{n}}\pi(x,y)\log\pi(x,y)dxdy.
\end{equation}
\end{Definition}
There is another variation in entropy-regularized optimal transport defined by the relative entropy instead of the Shannon entropy:
\begin{equation}
\tilde{C}_{\lambda}(\mu,\nu):=\inf_{\pi\in\Pi(\mu,\nu)}\int_{\mathbb{R}^{n}\times \mathbb{R}^{n}}c(x,y)\pi({x},{y})d{x}d{y}+\lambda \mathrm{KL}(\pi | d\mu\otimes d\nu).
\end{equation}

This is definable even when $ \Pi(\mu,\nu)$ includes a coupling that is not absolutely continuous with respect to the Lebesgue measure.
We note that when both $\mu$ and $\nu$ are absolutely continuous, the infimum is attained by the same $\pi$ for $C_{\lambda}$ and $\tilde{C}_{\lambda}$, and it depends only on $\mu$ and $\nu$. In the following part of the paper, we assume the absolute continuity of $\mu,\nu$, and~$\pi$ with respect to the Lebesgue measure for well-defined entropy regularization.
\section{Entropy-Regularized Optimal Transport between Multivariate Normal Distributions}
\label{Entropy-Regularized Optimal Transport between Multivariate Normal Distributions}
In this section, we provide a rigorous solution of entropy-regularized optimal transport between two multivariate normal distributions.
Throughout this section, we adopt the squared Euclidean distance $\|x-y\|^{2}$ as the cost function. To prove our theorem, we start by expressing $C_{\lambda}$ using mean vectors and covariance matrices. The following lemma is a known result; for example, see \cite{frec}.

\begin{Lemma}\label{lma:rvtrans}
Let $X\sim P, Y\sim Q$ be two random variables on $\mathbb{R}^{n}$ with means $\mu_{1},\mu_{2}$ and covariance matrices $\Sigma_{1},\Sigma_{2}$, respectively. If $\pi(x,y)$ is a coupling of $P$ and $Q$, we have:
\begin{equation}
\int_{\mathbb{R}^{n}\times \mathbb{R}^{n}}\|{x}-{y}\|^2\pi({x},{y})dxdy=\|\mu_{1}-\mu_{2}\|^{2}+\mathrm{tr}\left\{\Sigma_{1}+\Sigma_{2}-2\mathrm{Cov}(X,Y)\right\}.\label{siki:rvtrans}
\end{equation}
\end{Lemma}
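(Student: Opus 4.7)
The plan is to expand $\|x-y\|^2$ algebraically and integrate term by term against the coupling $\pi$, reducing each resulting expectation to a trace of a second-moment matrix. First I would write
\begin{equation}
\|x-y\|^2 = \|x\|^2 - 2\, x^{\sm} y + \|y\|^2,
\end{equation}
and integrate against $\pi(x,y)\,dx\,dy$. Because the marginals of $\pi$ are $P$ and $Q$, the first and third integrals reduce to $\mathbb{E}_{P}[\|X\|^{2}]$ and $\mathbb{E}_{Q}[\|Y\|^{2}]$ respectively, while the middle integral becomes $\mathbb{E}_{\pi}[X^{\sm}Y] = \mathrm{tr}\bigl(\mathbb{E}_{\pi}[XY^{\sm}]\bigr)$.

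Next I would evaluate each of these three expectations using the standard identity $\mathbb{E}[UV^{\sm}] = \mathrm{Cov}(U,V) + \mathbb{E}[U]\,\mathbb{E}[V]^{\sm}$ applied componentwise, which gives $\mathbb{E}_{P}[\|X\|^{2}] = \mathrm{tr}(\Sigma_{1}) + \|\mu_{1}\|^{2}$, $\mathbb{E}_{Q}[\|Y\|^{2}] = \mathrm{tr}(\Sigma_{2}) + \|\mu_{2}\|^{2}$, and $\mathrm{tr}\bigl(\mathbb{E}_{\pi}[XY^{\sm}]\bigr) = \mathrm{tr}\bigl(\mathrm{Cov}(X,Y)\bigr) + \mu_{1}^{\sm}\mu_{2}$. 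Substituting these back into the expansion and collecting the purely mean-dependent terms via $\|\mu_{1}\|^{2} - 2\mu_{1}^{\sm}\mu_{2} + \|\mu_{2}\|^{2} = \|\mu_{1}-\mu_{2}\|^{2}$ directly yields the right-hand side of \eqref{siki:rvtrans}.

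There is no substantial obstacle: the computation is a routine second-moment manipulation that only uses the marginal conditions defining a coupling and the definition of cross-covariance. The step that is conceptually worth isolating is the splitting of $\mathbb{E}_{\pi}[XY^{\sm}]$ into a coupling-independent piece $\mu_{1}\mu_{2}^{\sm}$ and the coupling-dependent piece $\mathrm{Cov}(X,Y)$, since this is precisely what reduces the subsequent minimization over $\pi$ in the Wasserstein and entropy-regularized problems to a minimization over the cross-covariance matrix alone.
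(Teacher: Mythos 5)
Your proposal is correct and follows essentially the same route as the paper: both reduce the integral to a second-moment computation, with the only cosmetic difference being that the paper first centralizes $X$ and $Y$ to peel off the $\|\mu_1-\mu_2\|^2$ term and then applies the trace identity to $(X-Y)(X-Y)^{\sm}$, whereas you expand $\|x-y\|^2$ directly and collect the mean terms at the end. Your closing observation — that isolating the coupling-dependent piece $\mathrm{Cov}(X,Y)$ is what reduces the optimization over $\pi$ to a finite-dimensional one — is exactly the point the paper makes immediately after the lemma.
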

\begin{proof}
Without loss of generality, we can assume $X$ and $Y$ are centralized, because:
\begin{equation}
\int\|(x-\mu_{1})-(y-\mu_{2})\|^{2}\pi(x,y)dxdy=\int\|x-y\|^{2}\pi(x,y)dxdy-\|\mu_{1}-\mu_{2}\|^{2}.
\end{equation}

Therefore, we have:
\begin{align}
\int\|{x}-{y}\|^2\pi({x},{y})d{x}d{y}&=E[\|X-Y\|^2]=E[\mathrm{tr}\{(X-Y)(X-Y)^{\sm}\}]\nonumber\\
&=\mathrm{tr}\left\{\Sigma_{1}+\Sigma_{2}-2\mathrm{Cov}(X,Y)\right\}\label{rvtrans}.
\end{align}
By adding $\|\mu_{1}-\mu_{2}\|^{2}$, we obtain \eqref{siki:rvtrans}.
\end{proof}

Lemma \ref{lma:rvtrans} shows that $\int_{\mathbb{R}^{n}\times \mathbb{R}^{n}}\|{x}-{y}\|^2\pi({x},{y})d{x}d{y}$ can be parameterized by the covariance matrices $\Sigma_{1},\Sigma_{2},\mathrm{Cov}(X,Y)$. Because $\Sigma_{1}$ and $\Sigma_{2}$ are fixed, the infinite-dimensional optimization of the coupling $\pi$ is a finite-dimensional optimization of covariance matrix $\mathrm{Cov}(X,Y)$.

We prepare the following lemma to prove Theorem \ref{main}. 
\begin{Lemma}
Under a fixed mean and covariance matrix, the probability measure that maximizes the entropy is a multivariate normal distribution.
\label{maxent}
\end{Lemma}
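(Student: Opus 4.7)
The plan is to establish the lemma via a Gibbs-inequality (relative entropy) argument rather than via Lagrange multipliers, since this approach is shorter and does not require dealing with the existence of the variational extremum.

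First I would fix a probability measure $\mu_{0}$ on $\mathbb{R}^{n}$ with mean vector $m$ and covariance matrix $\Sigma$, and assume it is absolutely continuous with density $p$ (otherwise the Shannon entropy is $-\infty$, so the inequality we want is trivial). I would then introduce the multivariate normal density $\phi$ on $\mathbb{R}^{n}$ with exactly the same mean $m$ and covariance $\Sigma$. The goal is to show $\mathrm{Ent}(p)\le \mathrm{Ent}(\phi)$.

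Next I would invoke the nonnegativity of the Kullback--Leibler divergence,
\begin{equation}
\mathrm{KL}(p\,\|\,\phi)=\int p(x)\log\frac{p(x)}{\phi(x)}\,dx\ge 0,
\end{equation}
and rearrange it to obtain $\mathrm{Ent}(p)\le -\int p(x)\log\phi(x)\,dx$. The critical observation is that
\begin{equation}
-\log\phi(x)=\frac{n}{2}\log(2\pi)+\frac{1}{2}\log\det\Sigma+\frac{1}{2}(x-m)^{\sm}\Sigma^{-1}(x-m)
\end{equation}
is an affine function of the entries of $x$ and $xx^{\sm}$. Hence $\int p(x)(-\log\phi(x))\,dx$ depends on $p$ only through its first and second moments. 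Because $p$ and $\phi$ share the same mean and covariance by construction, this integral equals $\int \phi(x)(-\log\phi(x))\,dx=\mathrm{Ent}(\phi)$. Combining gives $\mathrm{Ent}(p)\le \mathrm{Ent}(\phi)$.

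The only subtlety, and the part that I would write out carefully, is justifying that $\int p\log\phi$ is well defined and equals what the formal manipulation claims. Since $-\log\phi(x)$ is a nonnegative quadratic (up to the additive constant $\tfrac{n}{2}\log(2\pi)+\tfrac{1}{2}\log\det\Sigma$) and $p$ has finite second moment, the integral converges absolutely, so the rearrangement of $\mathrm{KL}(p\,\|\,\phi)$ is legitimate; in particular $\mathrm{Ent}(p)$ may be $-\infty$ but the inequality still holds. Equality in the Gibbs inequality holds iff $p=\phi$ almost everywhere, giving uniqueness of the maximizer among absolutely continuous measures. I do not anticipate a serious obstacle beyond this routine integrability check.
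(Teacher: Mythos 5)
Your proof is correct, and it is essentially the standard argument: the paper does not prove Lemma \ref{maxent} itself but defers to the maximum entropy literature (\cite{mep}, Theorem 3.1), where the proof is exactly this Gibbs-inequality computation --- nonnegativity of $\mathrm{KL}(p\,\|\,\phi)$ plus the observation that $-\log\phi$ is a quadratic polynomial whose integral against $p$ depends only on the (matched) first and second moments. Your handling of the integrability issue, including the possibility $\mathrm{Ent}(p)=-\infty$ and the equality case, is sound, so there is nothing to add.
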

Lemma \ref{maxent} is a particular case of the principle of maximum entropy \cite{jaynes}, and the proof can be found in \cite{mep} Theorem 3.1.

\begin{Theorem}\label{main}

Let $P\sim\mathcal{N}({\mu_1},\Sigma_1),Q\sim\mathcal{N}({\mu_2},\Sigma_2)$ be two multivariate normal distributions. The~optimal coupling $\pi$ of $P$ and $Q$ of the entropy-regularized optimal transport:
\begin{equation}
C_{\lambda}(P,Q)=\inf_{\pi\in\Pi(P,Q)}\int_{\mathbb{R}^{n}\times \mathbb{R}^{n}}\|{x}-{y}\|^2\pi({x},{y})d{x}d{y}-4\lambda \mathrm{Ent}(\pi). \quad\quad\quad(\text{\textasteriskcentered})
\label{siki:frac2}
\end{equation}
is expressed as:
\begin{equation}
\pi\sim\mathcal{N}\left(\begin{pmatrix}
{\mu}_1\\
{\mu}_2 \\
\end{pmatrix},
\begin{pmatrix}
\Sigma_1 &\Sigma_\lambda \\
\Sigma_\lambda^{\sm} & \Sigma_2 \\
\end{pmatrix}\right)
\end{equation}
where:
\begin{equation}
\Sigma_\lambda:=\Sigma_1^{1/2}(\Sigma_1^{1/2}\Sigma_2\Sigma_1^{1/2}+\lambda^2I)^{1/2}\Sigma_1^{-1/2}-{\lambda}I.
\label{optsigma}
\end{equation}

Furthermore, $C_{\lambda}(P,Q)$ can be written as:
\begingroup\makeatletter\def\f@size{9}\check@mathfonts
\def\maketag@@@#1{\hbox{\m@th\normalsize\normalfont#1}}%
\begin{align}
C_{\lambda}(P,Q)=&\|\mu_{1}-\mu_{2}\|^{2}+\mathrm{tr}(\Sigma_{1}+\Sigma_{2}-2(\Sigma_{1}^{1/2}\Sigma_{2}\Sigma_{1}^{1/2}+\lambda^{2}I)^{1/2})\nonumber\\
-&2{\lambda}\log |(\Sigma_{1}^{1/2}\Sigma_{2}\Sigma_{1}^{1/2}+\lambda^{2}I)^{1/2}-\lambda I|-2\lambda n\log(2\pi\lambda)-4\lambda n\log(2\pi)-2\lambda n
\label{dist}
\end{align}
\endgroup
and the relative entropy version can be written as:
\begin{equation}
\tilde{C}_{\lambda}(P,Q)=C_{\lambda}(P,Q)+2{\lambda}\log |\Sigma_1|| \Sigma_2|+4\lambda n\{\log(2\pi)+1\}.
\end{equation}
\end{Theorem}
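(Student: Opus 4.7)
The plan is to reduce the infinite-dimensional infimum to a finite-dimensional matrix optimization, solve the resulting matrix equation explicitly, and substitute back. By Lemma \ref{lma:rvtrans}, the cost integral depends on $\pi\in\Pi(P,Q)$ only through the cross-covariance $K:=\mathrm{Cov}(X,Y)$. By Lemma \ref{maxent}, among all couplings with the fixed marginal means and covariances and a prescribed $K$, the Shannon entropy is maximized by the joint Gaussian with block covariance $\begin{pmatrix}\Sigma_1 & K\\ K^{\sm} & \Sigma_2\end{pmatrix}$. Since $\lambda>0$ and the regularizer in \eqref{siki:frac2} is $-4\lambda\,\Ent(\pi)$, fixing $K$ and minimizing over $\pi$ forces the Gaussian choice, and the original problem collapses to a finite-dimensional optimization over $K$ making the block matrix positive definite.

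For such a Gaussian the entropy equals $n(\log(2\pi)+1)+\tfrac{1}{2}\log|\Sigma|$, and the Schur-complement identity yields $|\Sigma|=|\Sigma_1|\cdot|\Sigma_2-K^{\sm}\Sigma_1^{-1}K|$. After dropping $K$-independent terms, the objective becomes
\begin{equation*}
F(K) \;=\; -2\,\mathrm{tr}(K)\;-\;2\lambda\log|\Sigma_2 - K^{\sm}\Sigma_1^{-1}K|,
\end{equation*}
and the standard matrix-calculus identities for $\partial\,\mathrm{tr}(K)$ and $\partial\log|\cdot|$ give the first-order condition $\Sigma_2-K^{\sm}\Sigma_1^{-1}K = 2\lambda\,\Sigma_1^{-1}K$. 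To solve it, I would set $\widehat K:=\Sigma_1^{-1/2}K\Sigma_1^{1/2}$ and $\widetilde\Sigma_2:=\Sigma_1^{1/2}\Sigma_2\Sigma_1^{1/2}$; conjugating both sides by $\Sigma_1^{1/2}$ turns the condition, under the self-consistent ansatz that $\widehat K$ is symmetric, into the symmetric matrix quadratic $\widetilde\Sigma_2-\widehat K^{2}=2\lambda\widehat K$. Completing the square gives $(\widehat K+\lambda I)^2=\widetilde\Sigma_2+\lambda^2I$, so the positive-definite root is $\widehat K=(\widetilde\Sigma_2+\lambda^2I)^{1/2}-\lambda I$, which on unwinding the similarity recovers $K=\Sigma_\lambda$ of \eqref{optsigma}. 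Convexity of $-\log|\cdot|$ on the positive-definite cone together with coercivity of $-\mathrm{tr}(K)$ on the admissible set confirms this critical point is the unique global minimizer.

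With $K=\Sigma_\lambda$ in hand, the remainder is algebraic book-keeping. The cyclic-trace identity gives $\mathrm{tr}(\Sigma_\lambda)=\mathrm{tr}((\Sigma_1^{1/2}\Sigma_2\Sigma_1^{1/2}+\lambda^2 I)^{1/2})-\lambda n$; the first-order relation itself yields $|\Sigma_2-\Sigma_\lambda^{\sm}\Sigma_1^{-1}\Sigma_\lambda|=(2\lambda)^n|\Sigma_\lambda|/|\Sigma_1|$; and by the similarity $\Sigma_\lambda=\Sigma_1^{1/2}((\widetilde\Sigma_2+\lambda^2I)^{1/2}-\lambda I)\Sigma_1^{-1/2}$ one obtains $|\Sigma_\lambda|=|(\Sigma_1^{1/2}\Sigma_2\Sigma_1^{1/2}+\lambda^2 I)^{1/2}-\lambda I|$. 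Assembling these with $\|\mu_1-\mu_2\|^2$ and the entropy constants $n(\log(2\pi)+1)$ yields \eqref{dist}. The relative-entropy version then follows from $\mathrm{KL}(\pi\mid P\otimes Q)=-\Ent(\pi)+\Ent(P)+\Ent(Q)$, whence $\tilde C_\lambda=C_\lambda+4\lambda(\Ent(P)+\Ent(Q))$; inserting the Gaussian marginal entropies $\Ent(P)=\tfrac{n}{2}(\log(2\pi)+1)+\tfrac{1}{2}\log|\Sigma_1|$ and analogously for $Q$ produces the stated formula.

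The main obstacle is the algebraic step of solving and verifying the Riccati-type matrix equation $\Sigma_2-K^{\sm}\Sigma_1^{-1}K=2\lambda\Sigma_1^{-1}K$: because $\Sigma_1,\Sigma_2,K$ do not commute, naive scalar analogies break down, and the asymmetric appearance of $K$ (versus $K^{\sm}$) must be reconciled. The decisive trick is the symmetric conjugation $\widehat K=\Sigma_1^{-1/2}K\Sigma_1^{1/2}$ above, which reduces the problem to a symmetric matrix quadratic whose positive-definite root exists, is unique, and makes transparent the characteristic $\lambda^{2}I$ shift induced by entropy regularization.
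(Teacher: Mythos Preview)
Your argument is correct and lands on exactly the same matrix Riccati equation as the paper, $2\lambda\,\Sigma_1^{-1}K=\Sigma_2-K^{\sm}\Sigma_1^{-1}K$, which you then solve by the same conjugation $\widehat K=\Sigma_1^{-1/2}K\Sigma_1^{1/2}$ and completion of the square. The difference is in how you reach that equation. The paper first writes the Lagrangian, derives $\pi(x,y)\propto\exp\bigl(\alpha(x)+\beta(y)-\|x-y\|^2/(4\lambda)\bigr)$, invokes Lemma~\ref{maxent} to assert Gaussianity, and then matches the $x^{\sm}y$ coefficient in the exponent against the block-inverse formula. You instead invoke Lemmas~\ref{lma:rvtrans} and~\ref{maxent} up front to collapse the problem to a finite-dimensional optimization over $K$, and obtain the first-order condition by matrix calculus on $F(K)=-2\,\mathrm{tr}(K)-2\lambda\log|\Sigma_2-K^{\sm}\Sigma_1^{-1}K|$. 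The paper explicitly acknowledges your route (``the first half of the proof can be derived directly from Lemma~\ref{maxent}'') but chooses the Lagrangian derivation because it transfers verbatim to the Tsallis-entropy setting of Theorem~\ref{thm:tsallis}, where the $q$-exponential form of $\pi$ is what one needs and a max-entropy reduction alone is less transparent.

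Two small points worth tightening. First, what you call a ``self-consistent ansatz'' that $\widehat K$ is symmetric is in fact forced: the left-hand side $\Sigma_2-K^{\sm}\Sigma_1^{-1}K$ of the first-order condition is symmetric, so $\Sigma_1^{-1}K$ must be symmetric, hence $\widehat K=\Sigma_1^{1/2}(\Sigma_1^{-1}K)\Sigma_1^{1/2}$ is symmetric; the paper makes exactly this observation. Second, your convexity/coercivity sentence is a little loose (the linear term $-\mathrm{tr}(K)$ is not itself coercive); the clean argument is that $K\mapsto -\log|\Sigma_2-K^{\sm}\Sigma_1^{-1}K|$ is strictly convex on the bounded open set $\{K:\Sigma_2-K^{\sm}\Sigma_1^{-1}K\succ0\}$ and blows up at its boundary, which already yields existence and uniqueness of the minimizer.
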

We note that we use the regularization parameter $4\lambda$ in $(\text{\textasteriskcentered})$ for the sake of simplicity.
\begin{proof}

Although the first half of the proof can be derived directly from Lemma \ref{maxent}, we~provide a proof of this theorem by Lagrange calculus, which will be used later for the extension to $q$-normal distributions. Now, we define an optimization problem that is equivalent to the entropy-regularized optimal transport as follows:
\begin{align}
\mathrm{minimize}\ &\int\|x-y\|^{2}\pi(x,y)dxdy-4\lambda\mathrm{Ent}(\pi) \\
\mathrm{subject\ to}\ \ &\int\pi(x,y)dx=q(y)\ \text{for}\ \forall\ y\in\mathbb{R}^n,\nonumber\\
&\int\pi(x,y)dy=p(x)\ \text{for}\ \forall\ x\in\mathbb{R}^n\ . 
\label{siki:constraint}
\end{align}

Here, $p(x)$ and $q(y)$ are probability density functions of $P$ and $Q$, respectively. 
Let~$\alpha(x)$, $\beta(y)$ be Lagrange multipliers that correspond to the above two constraints. The Lagrangian function of \eqref{siki:constraint} is defined as:
\begin{align}
L(\pi,\alpha,\beta):&=\int\|x-y\|^2\pi(x,y)dxdy+4\lambda\int\pi(x,y)\log\pi(x,y)dxdy\nonumber\\
&-\int\alpha(x)\pi(x,y)dxdy+\int\alpha(x)p(x)dx\nonumber\\
&-\int\beta(y)\pi(x,y)dxdy+\int\beta(y)q(y)dy.\label{laggene}
\end{align}

Taking the functional derivative of \eqref{laggene} with respect to $\pi$, we obtain:
\begin{equation}
\delta L(\pi,\alpha,\beta)=\int\left(\|x-y\|^{2}+4\lambda\log\pi(x,y)-\alpha(x)-\beta(y)\right)\delta\pi(x,y)dxdy.
\end{equation}

By the fundamental lemma of the calculus of variations, we have:
\begin{equation}
\pi({x},{y})\propto\exp\left(\alpha({x})+\beta({y})-\frac{\|{x}-{y}\|^2}{4\lambda}\right). \label{siki:lagmoto}
\end{equation}

Here, $\alpha(x),\beta(y)$ are determined from the constraints \eqref{siki:constraint}.
We can assume that $\pi$ is a $2n$-variate normal distribution, because for a fixed covariance matrix $\mathrm{Cov}(X,Y)$, $-\Ent(\pi)$ takes the infimum when the coupling $\pi$ is a multivariate normal distribution by Lemma \ref{maxent}. 
Therefore, we can express $\pi$ by using $z=(x^{\sm},y^{\sm})^{\sm}$ and a covariance matrix \mbox{$\Sigma:=\mathrm{Cov}(X,Y)$}~as: 
\begin{equation}\label{mulnorm}
\pi(x,y)\propto\exp\left\{-\frac{1}{2}z^{\sm}
\begin{pmatrix}
\Sigma_{1}&\Sigma_{\phantom{2}} \\
\Sigma^{\sm} & \Sigma_{2} \\
\end{pmatrix}^{-1}
z\right\}.
\end{equation}

Putting: 
\begin{equation}
\begin{pmatrix}
\tilde{\Sigma}_{1}&\tilde{\Sigma} \\
\tilde{\Sigma}^{\sm} & \tilde{\Sigma}_{2} \\
\end{pmatrix}:=
\begin{pmatrix}
\Sigma_{1}&\Sigma_{\phantom{2}} \\
\Sigma^{\sm} & \Sigma_{2} \\
\end{pmatrix}^{-1}
,
\end{equation}
we write:
\begin{align}
-\frac{1}{2}z^{\sm}\begin{pmatrix}
\Sigma_{1}&\Sigma_{\phantom{2}} \\
\Sigma^{\sm} & \Sigma_{2} \\
\end{pmatrix}^{-1}z&=-\frac12\begin{pmatrix} x^{\sm}&y^{\sm} \end{pmatrix}
\begin{pmatrix}
\tilde{\Sigma}_{1}&\tilde{\Sigma} \\
\tilde{\Sigma}^{\sm} & \tilde{\Sigma}_{2} \\
\end{pmatrix}
\begin{pmatrix} x \\ y \end{pmatrix}\\
&=-\frac12 x^{\sm}\tilde{\Sigma}_{1}x-\frac12 y^{\sm}\tilde{\Sigma}_{2}y
-x^{\sm}\tilde{\Sigma}y.\label{siki:tenkai}
\end{align}

According to block matrix inversion formula \cite{cook},
$\tilde{\Sigma}=-\Sigma_1^{-1}\Sigma A^{-1}$ holds,
where~$A:=\Sigma_{2}-\Sigma^{\sm}\Sigma_{1}^{-1}\Sigma$ is positive definite.
Then, comparing the term $x^{\sm}y$ between $\eqref{siki:lagmoto}$ and $\eqref{siki:tenkai}$, we~obtain $\Sigma_1^{-1}\Sigma A^{-1}=\frac{1}{2\lambda}I$
and: 
\begin{equation}
2\lambda\Sigma_{1}^{-1}\Sigma=A=\Sigma_{2}-\Sigma^{\sm}\Sigma_{1}^{-1}\Sigma.
\label{eq:sym}
\end{equation}

Here, $\Sigma_{1}^{-1}\Sigma=\Sigma^{\sm}\Sigma_{1}^{-1}$ holds, because $A$ is a symmetric matrix, and thus, we obtain:
\begin{equation}
\lambda \Sigma_{1}^{-1}\Sigma+\lambda\Sigma^{\sm}\Sigma_{1}^{-1}=\Sigma_{2}-\Sigma^{\sm}\Sigma_{1}^{-1}\Sigma.
\end{equation}

Completing the square of the above equation, we obtain:
\begin{equation}
(\Sigma_{1}^{-1/2}(\Sigma+\lambda I)\Sigma_{1}^{1/2})^{\sm}(\Sigma_{1}^{-1/2}(\Sigma+\lambda I)\Sigma_{1}^{1/2})=\Sigma_{1}^{1/2}\Sigma_{2}\Sigma_{1}^{1/2}+\lambda^{2}I \label{eq:quad}
\end{equation}

Let $Q$ be an orthogonal matrix; then, \eqref{eq:quad} can be solved as:
\begin{equation}
\Sigma_{1}^{-1/2}(\Sigma+\lambda I)\Sigma_{1}^{1/2}=Q(\Sigma_{1}^{1/2}\Sigma_{2}\Sigma_{1}^{1/2}+\lambda^{2}I)^{1/2}.\label{eq:orth}
\end{equation}

We rearrange the above equation as follows:
\begin{equation}
\Sigma_{1}^{1/2}(\Sigma_{1}^{-1}\Sigma)\Sigma_{1}^{1/2}+\lambda I=Q(\Sigma_{1}^{1/2}\Sigma_{2}\Sigma_{1}^{1/2}+\lambda^{2}I)^{1/2}.
\end{equation}

Because the left terms and $(\Sigma_{1}^{ 1/2}\Sigma_{2}\Sigma_{1}^{ 1/2}+\lambda^{2}I)^{1/2}$ are all symmetric positive definite, we can conclude that $Q$ is the identity matrix by the uniqueness of the polar decomposition. Finally, we obtain:
\begin{equation}
\Sigma=\Sigma_1^{1/2}(\Sigma_1^{1/2}\Sigma_{2}\Sigma_1^{1/2}+\lambda^2 I)^{1/2}\Sigma_{1}^{-1/2}-{\lambda}I =:\Sigma_{\lambda}.
\end{equation}

We obtain \eqref{dist} by the direct calculation of $C_\lambda$ using Lemma \ref{lma:rvtrans} with this $\Sigma_{\lambda}$.
\end{proof}
The following corollary helps us to understand the properties of $\Sigma_\lambda$.
\begin{Corollary}\label{cor:mono}
Let $\nu_{\lambda,1}\leq\nu_{\lambda,2}\leq\dot\leq\nu_{\lambda,n}$ be the eigenvalues of $ \Sigma_{\lambda}$; then, $\nu_{\lambda,i}$ monotonically decreases with $\lambda$ for any $i\in\{1,2,\dot,n\}$.
\end{Corollary}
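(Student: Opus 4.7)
The plan is to reduce $\Sigma_\lambda$ to a symmetric matrix by a similarity transformation, read off its eigenvalues as explicit scalar functions of $\lambda$, and then differentiate.

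First I would observe that $\lambda I = \Sigma_1^{1/2}(\lambda I)\Sigma_1^{-1/2}$, so that
\begin{equation}
\Sigma_\lambda = \Sigma_1^{1/2}\bigl[(\Sigma_1^{1/2}\Sigma_2\Sigma_1^{1/2}+\lambda^2 I)^{1/2}-\lambda I\bigr]\Sigma_1^{-1/2}.
\end{equation}
Hence $\Sigma_\lambda$ is similar to the symmetric matrix $M_\lambda := (A+\lambda^2 I)^{1/2}-\lambda I$, where $A:=\Sigma_1^{1/2}\Sigma_2\Sigma_1^{1/2}$ is symmetric positive semidefinite and, crucially, independent of $\lambda$. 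Let $0\leq \mu_1\leq\mu_2\leq\cdots\leq\mu_n$ denote the eigenvalues of $A$. By the functional calculus for symmetric matrices, the eigenvalues of $M_\lambda$ (and thus of $\Sigma_\lambda$) are $\sqrt{\mu_i+\lambda^2}-\lambda$. Since the map $\mu\mapsto\sqrt{\mu+\lambda^2}-\lambda$ is strictly increasing in $\mu$ for any fixed $\lambda>0$, the ordering $\nu_{\lambda,i}=\sqrt{\mu_i+\lambda^2}-\lambda$ matches the labeling given in the statement and is stable as $\lambda$ varies.

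Differentiating with respect to $\lambda$ finishes the argument:
\begin{equation}
\frac{d\nu_{\lambda,i}}{d\lambda} = \frac{\lambda}{\sqrt{\mu_i+\lambda^2}}-1 = \frac{\lambda-\sqrt{\mu_i+\lambda^2}}{\sqrt{\mu_i+\lambda^2}} \leq 0,
\end{equation}
with equality precisely when $\mu_i=0$. I do not expect a substantive obstacle; the only delicate point is justifying that the non-symmetric matrix $\Sigma_\lambda$ genuinely carries the claimed real eigenvalues, which the similarity to $M_\lambda$ handles cleanly, together with the fact that the labeling of $\nu_{\lambda,i}$ does not get reshuffled as $\lambda$ changes because the underlying $\mu_i$ are $\lambda$-independent.
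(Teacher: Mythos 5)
Your proof is correct and follows essentially the same route as the paper: conjugating $\Sigma_\lambda$ by $\Sigma_1^{-1/2}$ to obtain the symmetric matrix $(\Sigma_1^{1/2}\Sigma_2\Sigma_1^{1/2}+\lambda^2 I)^{1/2}-\lambda I$ and reading off the eigenvalues as $\sqrt{\mu_i+\lambda^2}-\lambda$. Your explicit derivative computation and the remark about the labeling being stable in $\lambda$ are just slightly more detailed versions of what the paper leaves implicit.
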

\begin{proof}
Because $\Sigma_{1}^{-1/2}\Sigma_{\lambda}\Sigma_{1}^{1/2}=(\Sigma_1^{1/2}\Sigma_{2}\Sigma_1^{1/2}+\lambda^2 I)^{1/2}-{\lambda}I$ has the same eigenvalues as $\Sigma_{\lambda}$, if we let $\{\nu_{0, i}\}$ be the eigenvalues of $\Sigma_{1}^{1/2}\Sigma_{2}\Sigma_{1}^{1/2}$, $\nu_{\lambda,i}=\sqrt{\nu_{0,i}+\lambda^{2}}-\lambda$, which is a monotonically decreasing function of the regularization parameter $\lambda$.
\end{proof}
By the proof, for large $\lambda$, we can prove $\Sigma_1^{-1/2}\Sigma_{\lambda}\Sigma_1^{1/2}\simeq \frac{1}{2\lambda}\Sigma_1^{1/2}\Sigma_2\Sigma_1^{1/2}$ by diagonalization and $\nu_{\lambda,i}\simeq \frac{1}{2\lambda}\nu_{0,i}$.
Thus, $\Sigma_\lambda\simeq \frac{1}{2\lambda}\Sigma_1\Sigma_2$, and each element of $\Sigma_\lambda$ converges to zero
as $\lambda\rightarrow \infty$.

We show how entropy regularization behaves in two simple experiments. We calculate the entropy-regularized optimal transport cost $\mathcal{N}\left(\tiny\begin{pmatrix}0\\0\end{pmatrix},\begin{pmatrix}1&0\\0&1\end{pmatrix}\normalsize\right)$ and $\mathcal{N}\left(\tiny\begin{pmatrix}0\\0\end{pmatrix},\begin{pmatrix}2&-1\\-1&2\end{pmatrix}\normalsize\right)$ in the original version and the relative entropy version in Figure \ref{fig:wdist}. We separate the entropy-regularized optimal transport cost into the transport cost term and regularization term and display both of them.

\vspace{-6pt}
\begin{figure}[H]
 \includegraphics[width=14cm]{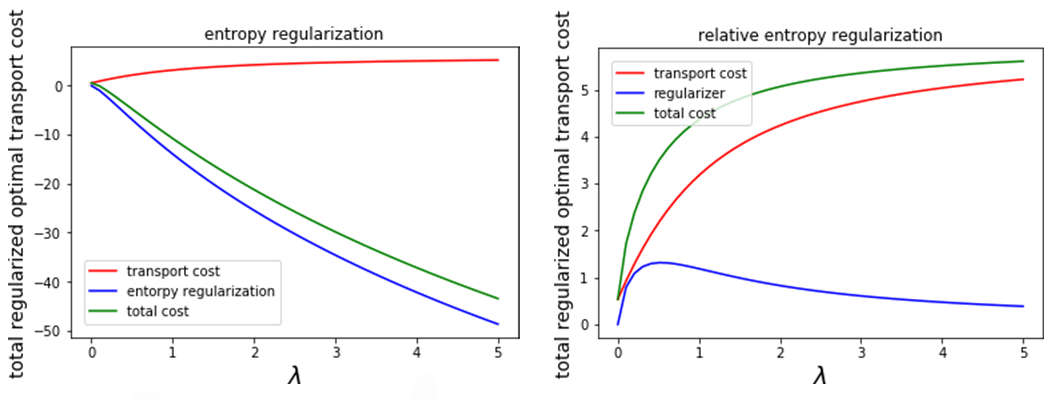}
 \caption[]{Graph of the entropy-regularized optimal transport cost between $\mathcal{N}\left(\tiny\begin{pmatrix}0\\0\end{pmatrix},\begin{pmatrix}1&0\\0&1\end{pmatrix}\normalsize\right)$ and $\mathcal{N}\left(\tiny\begin{pmatrix}0\\0\end{pmatrix},\begin{pmatrix}2&-1\\-1&2\end{pmatrix}\normalsize\right)$ with respect to $\lambda$ from zero to $10$. }
 \label{fig:wdist}
\end{figure}

It is reasonable that as $\lambda \downarrow 0$, $\Sigma_\lambda$ converges to $\Sigma_1^{1/2}(\Sigma_1^{1/2}\Sigma_2\Sigma_1^{1/2})^{1/2}\Sigma_1^{-1/2}$, which is equal to the original optimal coupling of nonregularized optimal transport and as $\lambda \rightarrow \infty$, $\Sigma_\lambda$ converges to {$\boldsymbol{0}$}. 
This is a special case of Corollary \ref{cor:mono}.
The larger $\lambda$ becomes, the less correlated the optimal coupling is. We visualize this behavior by computing the optimal couplings of two one-dimensional normal distributions in Figure \ref{fig:coup}.

\vspace{-6pt}
\nointerlineskip
\begin{figure}[H]
\begin{center}
 \includegraphics[width=15cm]{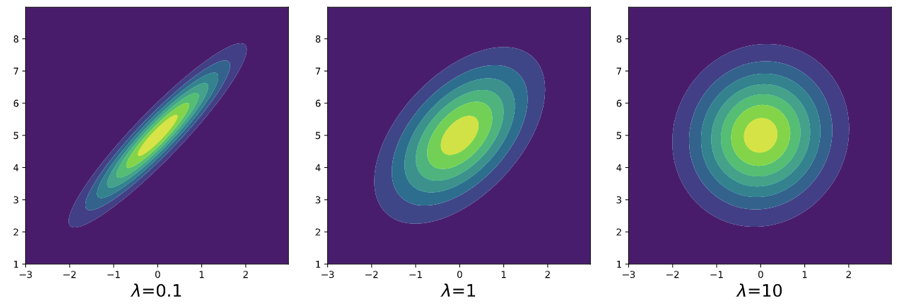}
\end{center}
\caption{Contours of the density functions of the entropy-regularized optimal coupling of $\mathcal{N}(0,1)$ and $\mathcal{N}(5,2)$ in three different parameters $\lambda=0.1,1,10$. All of the optimal couplings are two-variate normal distributions. \label{fig:coup}}
\end{figure}

The left panel shows the original version. The transport cost is always positive, and the entropy regularization term can take both signs in general; then, the sign and total cost depend on their balance. We note that the transport cost as a function of $\lambda$ is bounded, whereas the entropy regularization is not. The boundedness of the optimal cost is deduced from \eqref{lma:rvtrans} and Corollary \ref{cor:mono}, and the unboundedness of the entropy regularization is due to the regularization parameter $\lambda$ multiplied by the entropy. 
The right panel shows the relative entropy version. It always takes a non-negative value. Furthermore, because the total cost is bounded by the value for the independent joint distribution (which is always a feasible coupling), both the transport cost and the relative entropy regularization regularization term are also bounded. Nevertheless, the larger the regularization parameter $\lambda$, the greater the influence of entropy regularization over the total cost.

It is known that a specific Riemannian metric can be defined in the space of multivariate normal distributions, which induces the Wasserstein distance \cite{taka}. To understand the effect of entropy regularization, we illustrate how entropy regularization deforms this geometric structure in Figure \ref{fig:embd}. Here, we generate $100$ two-variate normal distributions $\{\mathcal{N}(0,\Sigma_{r,k})\}_{r,k\in\{1,2,\dot,10\}}$, where
$\{\Sigma_{r,k}\}$ is defined as:
\vspace{10pt}

\nointerlineskip
\begin{equation}
\quad \Sigma_{r,k}=
\begin{pmatrix}
\cos\left(2\pi\cdot\frac{k}{10}\right)&-\sin\left(2\pi\cdot\frac{k}{10}\right)\\
\sin\left(2\pi\cdot\frac{k}{10}\right)&\cos\left(2\pi\cdot\frac{k}{10}\right)
\end{pmatrix}^{\sm}
\begin{pmatrix}
1&0\\
0&\sqrt{\frac{r}{10}}
\end{pmatrix}
\begin{pmatrix}
\cos\left(2\pi\cdot\frac{k}{10}\right)&-\sin\left(2\pi\cdot\frac{k}{10}\right)\\
\sin\left(2\pi\cdot\frac{k}{10}\right)&\cos\left(2\pi\cdot\frac{k}{10}\right)
\end{pmatrix}.
\end{equation}


\vspace{7pt}

To visualize the geometric structure of these two-variate normal distributions, we compute the relative entropy-regularized optimal transport cost $\tilde{C}_{\lambda}$ between each pairwise two-variate normal distributions. Then, we apply multidimensional scaling \cite{kruskal1964nonmetric} to embed them into a plane (see Figure \ref{fig:embd}). We can see entropy regularization deforming the geometric structure of the space of multivariate normal distributions. The deformation for distributions close to the isotopic normal distribution is more sensitive to the change in $\lambda$.

\vspace{-3pt}

\nointerlineskip
\begin{figure}[H]
\begin{center}
 \includegraphics[width=15.5cm]{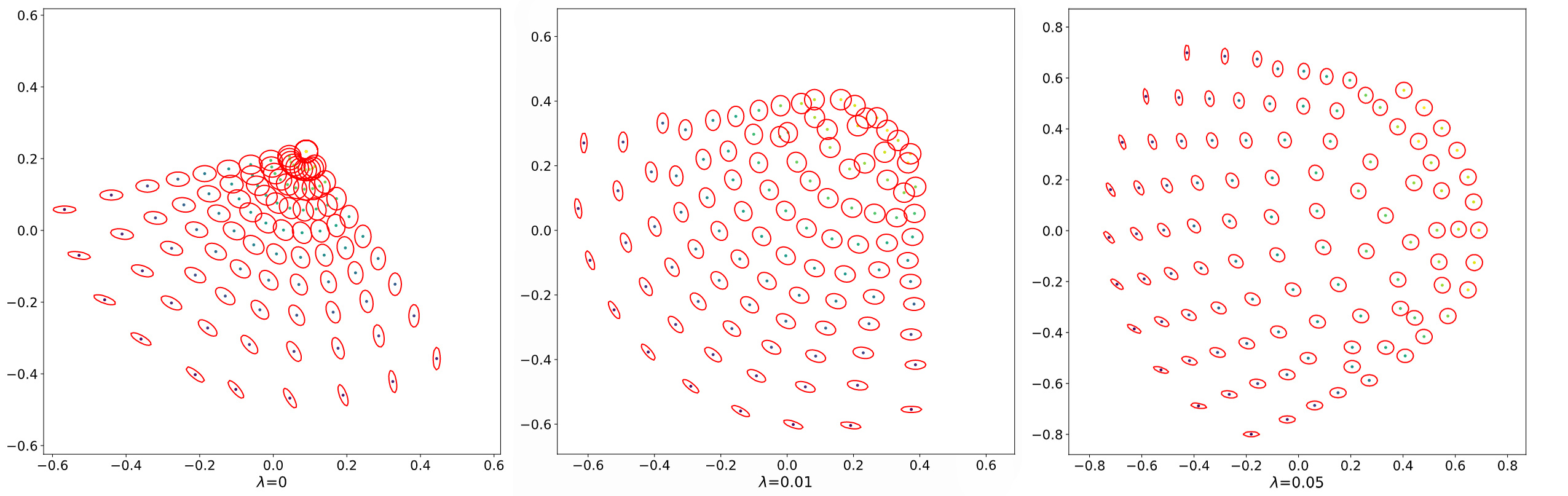}
\end{center}
 \caption{Multidimensional scaling of two-variate normal distributions. The pairwise dissimilarities are given by the square root of the entropy-regularized optimal transport cost $ \tilde{C}_{\lambda}$ for three different regularization parameters $\lambda=0,0.01,0.05$. Each~ellipse in the figure represents a contour of the density function $\{\mathcal{N}(0,\Sigma_{r,k})\}$.}
 \label{fig:embd}
\end{figure}


The following corollary states that if we allow orthogonal transformations of two multivariate normal distributions with fixed covariance matrices, then the minimum and maximum of $C_\lambda$ are attained when $\Sigma_1$ and $\Sigma_2$ are diagonalizable by the same orthogonal matrix or, equivalently, when the ellipsoidal contours of the two density functions are aligned with the same orthogonal axes.

\begin{Corollary} \label{cor:1}
With the same settings as in Theorem \ref{main}, fix $\mu_1$, $\mu_2$, $\Sigma_1$, and all eigenvalues of $\Sigma_2$. When $\Sigma_1$ is diagonalized as $\Sigma_1=\Gamma^{\sm} \Lambda_1^\downarrow \Gamma$, where $\Lambda_1^\downarrow$ is the diagonal matrix of the eigenvalues of $\Sigma_1$ in descending order and $\Gamma$ is an orthogonal matrix, 
\begin{enumerate}
\item[(i)]
$C_\lambda(P,Q)$ is minimized by
$\Sigma_2=\Gamma^{\sm} \Lambda_2^\downarrow \Gamma$ and
\item[(ii)]
$C_\lambda(P,Q)$ is maximized by
$\Sigma_2=\Gamma^{\sm} \Lambda_2^\uparrow \Gamma$,
\end{enumerate}
where
$\Lambda_2^\downarrow$ and $\Lambda_2^\uparrow$ are the diagonal matrices of the eigenvalues of $\Sigma_2$ in descending and ascending order, respectively.
Therefore, neither the minimizer, nor the maximizer depend on the choice of $\lambda$.
\end{Corollary}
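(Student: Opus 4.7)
The plan is to reduce the optimization of $C_\lambda(P,Q)$ over the orientation of $\Sigma_2$ (with the spectrum of $\Sigma_2$ held fixed) to a scalar question about the eigenvalues of $\Sigma_1^{1/2}\Sigma_2\Sigma_1^{1/2}$, and then to resolve it by combining the Horn--Weyl log-majorization with a $\lambda$-uniform convexity property. Concretely, with $\mu_1,\mu_2,\Sigma_1$ and the eigenvalues $b_1\geq\cdots\geq b_n$ of $\Sigma_2$ all fixed, the quantities $\|\mu_1-\mu_2\|^2$, $\mathrm{tr}(\Sigma_1)$, $\mathrm{tr}(\Sigma_2)=\sum b_i$, and the $\lambda$-dependent constants in \eqref{dist} are orientation-invariant. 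Writing $a_1\geq\cdots\geq a_n$ for the eigenvalues of $\Sigma_1$ and $\nu_1\geq\cdots\geq\nu_n$ for those of $\Sigma_1^{1/2}\Sigma_2\Sigma_1^{1/2}$ (equivalently of $\Sigma_1\Sigma_2$), one may rewrite \eqref{dist} as
\begin{equation*}
C_\lambda(P,Q)=\kappa-\sum_{i=1}^n h(\nu_i),\qquad h(\nu):=2\sqrt{\nu+\lambda^2}+2\lambda\log\bigl(\sqrt{\nu+\lambda^2}-\lambda\bigr),
\end{equation*}
where $\kappa$ depends only on the fixed data. Thus (i) amounts to showing that $\sum h(\nu_i)$ is \emph{maximized} at the same-order alignment and (ii) that it is \emph{minimized} at the opposite-order one.

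The analytic input I would use is that $h\circ\exp$ is strictly increasing and strictly convex, uniformly in $\lambda>0$. Indeed, using $h'(\nu)=(\sqrt{\nu+\lambda^2}-\lambda)^{-1}$ and rationalization,
\begin{equation*}
(h\circ\exp)'(t)=e^t\,h'(e^t)=\sqrt{e^t+\lambda^2}+\lambda,
\end{equation*}
which is strictly positive and strictly increasing in $t$. Consequently the symmetric function $(t_1,\dots,t_n)\mapsto\sum_i h(e^{t_i})$ is Schur-convex, so by Hardy--Littlewood--P\'olya, $(\log x_i)\prec(\log y_i)$ implies $\sum h(x_i)\leq\sum h(y_i)$.

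Finally, I would invoke the Horn--Weyl log-majorization bracket for the eigenvalues of a product of two positive-definite matrices,
\begin{equation*}
(a_i b_{n-i+1})^{\downarrow}\;\prec_{\log}\;(\nu_i)\;\prec_{\log}\;(a_i b_i),
\end{equation*}
in which the upper envelope is attained when $\Sigma_1,\Sigma_2$ are co-diagonalizable in the same order (i.e.\ $\Sigma_2=\Gamma^{\sm}\Lambda_2^{\downarrow}\Gamma$) and the lower one when they are co-diagonalizable in the opposite order ($\Sigma_2=\Gamma^{\sm}\Lambda_2^{\uparrow}\Gamma$). Combining with the Schur-convexity of the previous step yields $\sum h(\nu_i)\leq\sum h(a_i b_i)$ with equality at the same-order alignment, giving (i), and $\sum h(\nu_i)\geq\sum h(a_i b_{n-i+1})$ with equality at the opposite-order alignment, giving (ii). Because neither the log-majorization envelope nor the direction of convexity of $h\circ\exp$ depends on $\lambda$, the optimizers are $\lambda$-independent. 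The main obstacle is the lower bracket $(a_i b_{n-i+1})^{\downarrow}\prec_{\log}(\nu_i)$: unlike the upper Horn--Weyl bound, it is less commonly cited and would need to be invoked from a reference such as the Marshall--Olkin--Arnold monograph on majorization, or derived via the variational characterization $\nu_k=\max_{\dim V=k}\min_{v\in V\setminus\{0\}} v^{\sm}\Sigma_2 v/v^{\sm}\Sigma_1^{-1}v$; once both bracketing inequalities are in hand, everything else is bookkeeping on Theorem~\ref{main}.
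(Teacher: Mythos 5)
Your proposal is correct and takes essentially the same route as the paper's own proof: both reduce $C_\lambda$ to a sum of a fixed scalar function of the eigenvalues of $\Sigma_1^{1/2}\Sigma_2\Sigma_1^{1/2}$, establish the relevant convexity after the substitution $\nu=e^t$ (the paper checks concavity of the negative via the second derivative, you via monotonicity of the first derivative $\sqrt{e^t+\lambda^2}+\lambda$), and finish with the same two-sided log-majorization bracket combined with a Karamata/weak-supermajorization step. The lower bracket you flag as the main obstacle is precisely what the paper cites (Exercise 6.5.3 of Bhatia's \emph{Positive Definite Matrices}, or Theorem 6.13 and Corollary 6.14 of Hiai--Petz), so there is no remaining gap.
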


\begin{proof}
Because $\mu_1$, $\mu_2$, $\Sigma_1$, and all eigenvalues of $\Sigma_2$ are fixed,
\begingroup\makeatletter\def\f@size{8.8}\check@mathfonts
\def\maketag@@@#1{\hbox{\m@th\normalsize\normalfont#1}}%
\begin{align}
C_{\lambda}(P,Q)&= -2{\rm tr}\left((\Sigma_1^{1/2}\Sigma_2\Sigma_1^{1/2}+\lambda^2 I)^{1/2}\right)
-\frac{\lambda}{2}\log|(\Sigma_1^{1/2}\Sigma_2\Sigma_1^{1/2}+\lambda^2 I)^{1/2}-\lambda I|+({\rm constant})\\
&= \sum_{i=1}^n -2(\nu_i+\lambda^2)^{1/2} -\frac{\lambda}{2}\log\{(\nu_i+\lambda^2)^{1/2}-\lambda\}+({\rm constant})\\
&=\sum_{i=1}^n g_\lambda(\log(\nu_i))+({\rm constant})
\end{align}
\endgroup
where $\nu_1\leq \dots \leq \nu_n$ are the eigenvalues of $\Sigma_1^{1/2}\Sigma_2\Sigma_1^{1/2}$ and: 
\begin{equation}
g_\lambda(x):= -2(e^x+\lambda^2)^{1/2} -\frac{\lambda}{2}\log\{(e^x+\lambda^2)^{1/2}-\lambda\}.
\end{equation}

Note that
$g_\lambda(x)$ is a concave function, because:
\begin{equation}
g_\lambda''(x)= -\frac{e^x(4e^x+7\lambda^2)}{8(e^x+\lambda^2)^{3/2}}<0.
\end{equation}

Let $\nu_1^{\downarrow\downarrow}\leq \dots \leq\nu_n^{\downarrow\downarrow}$ and 
$\nu_1^{\downarrow\uparrow}\leq \dots \leq\nu_n^{\downarrow\uparrow}$ be the eigenvalues of
$\Lambda_1^\downarrow \Lambda_2^\downarrow$ and $\Lambda_1^\downarrow \Lambda_2^\uparrow$, respectively.
By Exercise 6.5.3 of \cite{posi} or Theorem 6.13 and Corollary 6.14 of \cite{hiai2014}, 
\begin{align}\label{eq:cor1}
(\log(\nu_i^{\downarrow\uparrow})) \prec (\log(\nu_i)) \prec (\log(\nu_i^{\downarrow\downarrow})), 
\end{align}

Here, for $(a_i), (b_i) \in \mathbb{R}^n$ such that $a_1\geq\dots\geq a_n$ and $b_1\geq\dots\geq b_n$, 
$(a_i) \prec (b_i)$ means: 
\begin{equation}
\sum_{i=1}^k a_i \leq \sum_{i=1}^k b_i \mbox{~~for~~} k=1,\dots, n-1, \mbox{~~and~~}
\sum_{i=1}^n a_i = \sum_{i=1}^n b_i
\end{equation}
and $(a_i)$ is said to be majorized by $(b_i)$.
Because $g_\lambda(x)$ is concave,
\begin{equation}
g_\lambda(\log(\nu_i^{\downarrow\uparrow})) \prec^{\rm w} g_\lambda (\log(\nu_i)) \prec^{\rm w} g_\lambda(\log(\nu_i^{\downarrow\downarrow})), 
\end{equation}
where $\prec^{\rm w}$ represents weak supermajorization, i.e., 
$(a_i) \prec^{\rm w} (b_i)$ means: 
\begin{equation}
\sum_{i=k}^n a_i \geq \sum_{i=k}^n b_i \mbox{~~for~~} k=1,\dots, n
\end{equation}
(see Theorem 5.A.1 of \cite{marshall2011}, for example).
Therefore,
\begin{equation}
\sum_{i=1}^n g_\lambda(\log(\nu_i^{\downarrow\uparrow}))
\geq \sum_{i=1}^n g_\lambda (\log(\nu_i))
\geq \sum_{i=1}^n g_\lambda(\log(\nu_i^{\downarrow\downarrow})).
\end{equation}

As in Case (i) (or (ii)), the eigenvalues of $\Sigma_1^{1/2}\Sigma_2\Sigma_1^{1/2}$ correspond to the eigenvalues of $\Lambda_1^{\downarrow}\Lambda_2^{\downarrow}$ (or $\Lambda_1^{\downarrow}\Lambda_2^{\uparrow}$, respectively), the corollary follows.
\end{proof}

Note that a special case of Corollary \ref{cor:1} for the ordinary Wasserstein metric ($\lambda=0$)
has been studied in the context of fidelity and the Bures distance in quantum information theory. See Lemma 3 of \cite{markham2008}. Their proof is not directly applicable to our generalized result; thus, we used another approach to prove it.

\section{Extension to Tsallis Entropy Regularization}
\label{Extension to Tsallis entropy regularization}
In this section, we consider a generalization of entropy-regularized optimal transport.
We now focus on the Tsallis entropy \cite{tsallis}, which is a generalization of the Shannon entropy and appears in nonequilibrium statistical mechanics. We show that the optimal coupling of Tsallis entropy-regularized optimal transport between two $q$-normal distributions is also a $q$-normal distribution. We start by recalling the definition of the $q$-exponential function and $q$-logarithmic function based on \cite{tsallis}.
\begin{Definition}
Let $q$ be a real parameter, and let $u>0$. The $q$-logarithmic function is defined as:
\begin{equation}
\log_{q}(u):=\begin{cases}
\frac{1}{1-q}(u^{1-q}-1)\quad\text{if}\ \ q\neq1,\\ 
\log(u) \quad \text{if}\ \ q=1
\end{cases}
\end{equation}
and the $q$-exponential function is defined as:
\begin{equation}
\exp_{q}(u):=
\begin{cases}
[1+(1-q)u]_{+}^{\frac{1}{1-q}} \quad \text{if} \ \ q\neq1,\\
\exp(u)\quad\text{if}\ \ q=1
\end{cases}
\end{equation}
\end{Definition}
\begin{Definition}
Let $q<1$ or $1<q<1+\frac{2}{n}$; an $n$-variate $q$-normal distribution is defined by two parameters: $\mu\in\mathbb{R}^{n}$ and a positive definite matrix $\Sigma$, and its density function is:
\begin{equation}
f(x):=\frac{1}{C_{q}(\Sigma)}\exp_{q}\left(-(x-\mu)^{\sm}\Sigma^{-1}(x-\mu)\right),
\label{elip}
\end{equation}
where $C_{q}(\Sigma)$ is a normalizing constant. $\mu$ and $\Sigma$ are called the location vector and scale matrix,~ respectively.
\end{Definition}
In the following, we write the multivariate $q$-normal distribution $\mathcal{N}_{q}(\mu,\Sigma)$.
We note that the property of the $q$-normal distribution changes in accordance with $q$. 
The $q$-normal distribution has an unbounded support for $1<q<\frac{2}{n}$ and a bounded support for $q<1$.
The second moment exists for $q<1+\frac{2}{n+2}$, and the covariance becomes $\frac{1}{2+(n+2)(1-q)}\Sigma$. We remark that each $n$-variate $\left(1+\frac{2}{\nu+n}\right)$-normal distribution is equivalent to an $n$-variate $t$-distribution with $\nu$ degrees of freedom,
\begin{equation}
\frac{\Gamma[(\nu+n) / 2]}{\Gamma(\nu / 2) \nu^{n / 2} \pi^{n / 2}|{\Sigma}|^{1 / 2}}\left[1+\frac{1}{\nu}({x}-{\mu})^{T} {\Sigma}^{-1}({x}-{\mu})\right]^{-(\nu+n) / 2},
\end{equation}
 for $1<q<1+\frac{2}{n+2}$ and an $n$-variate normal distribution for $q\downarrow 1$.
\begin{Definition}
Let $p$ be a probability density function. The Tsallis entropy is defined as: 
\begin{equation}
S_{q}(p):=\int p(x)\log_{q}\frac{1}{p(x)}dx=\frac{1}{q-1}\left(1-\int p(x)^{q}dx\right).
\end{equation}
\end{Definition}

Then, the Tsallis entropy-regularized optimal transport is defined as:
\begin{align}
\mathrm{minimize}\ &\int\|x-y\|^{2}\pi(x,y)dxdy-2\lambda S_{q}(\pi) \\
\mathrm{subject\ to}\ \ &\int\pi(x,y)dx=q(y)\ \text{for}\ \forall\ y\in\mathbb{R}^n,\nonumber\\
&\int\pi(x,y)dy=p(x)\ \text{for}\ \forall\ x\in\mathbb{R}^n\ . 
\label{siki:constraint_daen}
\end{align}

The following lemma is a generalization of the maximum entropy principle for the Shannon entropy shown in Section 2 of \cite{max_tsallis}.
\begin{Lemma}\label{mep_tsa}
Let $P$ be a centered $n$-dimensional probability measure with a fixed covariance matrix $\Sigma$; the maximizer of the Renyi $\alpha$-entropy: 
\begin{equation}
\frac{1}{1-\alpha} \log \int f(x)^{\alpha} \mathrm{d}x
\end{equation}
under the constraint is $\mathcal{N}_{2-\alpha}(\mathbf{0},((n+2)\alpha-n)\Sigma)$ for $\frac{n}{n+2}<\alpha<1$.
\end{Lemma}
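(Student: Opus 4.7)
My plan is to set up the problem as a constrained variational problem on the space of centered densities with fixed covariance, derive the first order condition, recognize the resulting form as a $q$-normal density, and finally pin down the scale matrix by matching second moments.

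First I would note that for $\alpha<1$ the prefactor $\frac{1}{1-\alpha}$ and the logarithm are monotone, so maximizing the Rényi $\alpha$-entropy is equivalent to maximizing $\int f(x)^\alpha\,dx$ over densities satisfying $\int f\,dx=1$, $\int x f\,dx=0$, $\int xx^{\sm}f\,dx=\Sigma$. Introducing Lagrange multipliers $\lambda_0\in\mathbb{R}$ for the mass constraint and a symmetric matrix $\Lambda$ for the covariance constraint (the mean constraint will be automatic by symmetry), the functional derivative of
\begin{equation}
\int f^\alpha\,dx - \lambda_0\Bigl(\int f\,dx-1\Bigr) - \mathrm{tr}\Bigl(\Lambda\Bigl(\int xx^{\sm}f\,dx-\Sigma\Bigr)\Bigr)
\end{equation}
with respect to $f$ yields $\alpha f(x)^{\alpha-1}=\lambda_0+x^{\sm}\Lambda x$. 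Since $\alpha-1<0$, this gives
\begin{equation}
f(x)\propto \bigl(c_1+c_2\, x^{\sm}A x\bigr)^{1/(\alpha-1)}
\end{equation}
for some constants $c_1,c_2>0$ and a positive definite $A$ that must be proportional to $\Sigma^{-1}$ (by rotational symmetry of the constraints under $x\mapsto \Sigma^{1/2}\Sigma^{-1/2}x$, or equivalently by the spectral decomposition of $\Lambda$ forced by the covariance constraint).

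Next, I would rewrite this density in the $q$-exponential form. Setting $q:=2-\alpha$, so that $1-q=\alpha-1$ and $1/(1-q)=1/(\alpha-1)$, the density becomes
\begin{equation}
f(x)\propto \bigl[1+(1-q)\bigl(-x^{\sm}\tilde\Sigma^{-1}x\bigr)\bigr]_+^{1/(1-q)} = \exp_q\bigl(-x^{\sm}\tilde\Sigma^{-1}x\bigr)
\end{equation}
for a suitable scale matrix $\tilde\Sigma$, i.e.\ $f$ is the density of $\mathcal{N}_{2-\alpha}(\mathbf{0},\tilde\Sigma)$. The restriction $\frac{n}{n+2}<\alpha<1$ translates to $1<q<1+\frac{2}{n+2}$, which is exactly the range in which the $q$-normal has unbounded support and a finite second moment, so the variational problem is well posed.

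Finally, I would fix $\tilde\Sigma$ by imposing the covariance constraint. Using the stated identity that the covariance of $\mathcal{N}_q(\mathbf{0},\tilde\Sigma)$ equals $\frac{1}{2+(n+2)(1-q)}\tilde\Sigma$, and substituting $q=2-\alpha$ so that $1-q=\alpha-1$, the equation $\frac{1}{2+(n+2)(\alpha-1)}\tilde\Sigma=\Sigma$ gives
\begin{equation}
\tilde\Sigma=\bigl((n+2)\alpha-n\bigr)\Sigma,
\end{equation}
which is the stated maximizer. The main obstacle I foresee is not the calculation itself but justifying that the critical point is a genuine maximum: I would settle this by invoking strict concavity of $f\mapsto\int f^\alpha\,dx$ on the convex feasible set when $0<\alpha<1$, which guarantees uniqueness of the extremum and makes the computed critical point the maximizer. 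Checking that $(n+2)\alpha-n>0$, which holds precisely under the hypothesis $\alpha>\frac{n}{n+2}$, confirms that $\tilde\Sigma$ is positive definite and the candidate density is admissible.
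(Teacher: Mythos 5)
Your argument is correct, and it is worth noting that the paper itself does not prove Lemma \ref{mep_tsa} at all: it simply cites Section 2 of the reference on Tsallis maximum-entropy distributions and the remark that R\'enyi and Tsallis entropies share the same maximizers. So there is no in-paper proof to compare against; your Lagrangian-plus-concavity derivation is the standard route and fills that gap self-containedly. The key steps all check out: for $\alpha<1$ the problem reduces to maximizing $\int f^\alpha$; the stationarity condition $\alpha f^{\alpha-1}=\lambda_0+x^{\sm}\Lambda x$ forces $f\propto(c_1+c_2\,x^{\sm}\Sigma^{-1}x)^{1/(\alpha-1)}$, which is exactly $\exp_q$ with $q=2-\alpha$ (so $1<q<1+\tfrac{2}{n+2}$, the range where the $q$-normal has full support and finite second moment); and the covariance identity $\mathrm{Cov}=\tfrac{1}{2+(n+2)(1-q)}\tilde\Sigma$ yields $\tilde\Sigma=((n+2)\alpha-n)\Sigma$, positive definite precisely when $\alpha>\tfrac{n}{n+2}$. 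Two small points to tidy: the map you invoke for reducing to the isotropic case should read $x\mapsto\Sigma^{-1/2}x$ (as written, $\Sigma^{1/2}\Sigma^{-1/2}x$ is the identity); and rather than appealing only to ``uniqueness of the extremum,'' the cleanest closing step is the tangent-line (KKT sufficiency) inequality $\int f^\alpha\leq\int (f^*)^\alpha+\int\alpha (f^*)^{\alpha-1}(f-f^*)=\int (f^*)^\alpha$ for every feasible $f$, since $\alpha(f^*)^{\alpha-1}=\lambda_0+x^{\sm}\Lambda x$ integrates identically against the shared moment constraints; this simultaneously shows the supremum is finite, attained at $f^*$, and unique by strict concavity.
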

We note that the maximizers of the Renyi $\alpha$-entropy and the Tsallis entropy with $q=\alpha$ coincide; thus, the above lemma also holds for the Tsallis entropy. This is mentioned, for~example, in Section 9 of \cite{naudts}.

To prove Theorem \ref{thm:tsallis}, we use the following property of multivariate $t$-distributions, which is summarized in Chapter 1 of \cite{kotz}.
\begin{Lemma}
Let $X$ be a random vector following an $n$-variate $t$-distribution with degree of freedom $\nu$. Considering a partition of the mean vector $\mu$ and scale matrix $\Sigma$, such as:
\begin{equation}\label{tmarg}
{X}=\left(\begin{array}{l}{X}_{1} \\ {X}_{2}\end{array}\right), \quad{\mu}=\left(\begin{array}{l}{\mu}_{1} \\ {\mu}_{2}\end{array}\right)
,\quad \Sigma=\left(\begin{array}{ll}\Sigma_{11} & \Sigma_{12} \\ \Sigma_{21} & \Sigma_{22}\end{array}\right),
\end{equation}
$X_{1}$ follows a $p$-variate $t$-distribution with degree of freedom $\nu$, mean vector $\mu_{1}$, and scale matrix $\Sigma_{11}$, where $p$ is the dimension of $X_{1}$.
\end{Lemma}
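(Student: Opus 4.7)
The plan is to derive the marginal distribution of $X_1$ by directly integrating the joint density of the $n$-variate $t$-distribution over $x_2$, exploiting the block decomposition of the quadratic form in the exponent.

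First, I would apply the Schur complement to split
$(x-\mu)^{\sm}\Sigma^{-1}(x-\mu) = (x_1-\mu_1)^{\sm}\Sigma_{11}^{-1}(x_1-\mu_1) + (x_2-\tilde{\mu}_2(x_1))^{\sm}\Sigma_{22|1}^{-1}(x_2-\tilde{\mu}_2(x_1))$,
where $\Sigma_{22|1}:=\Sigma_{22}-\Sigma_{21}\Sigma_{11}^{-1}\Sigma_{12}$ and $\tilde{\mu}_2(x_1):=\mu_2+\Sigma_{21}\Sigma_{11}^{-1}(x_1-\mu_1)$. Setting $a(x_1):=1+\tfrac{1}{\nu}(x_1-\mu_1)^{\sm}\Sigma_{11}^{-1}(x_1-\mu_1)$, the joint density becomes proportional to $\bigl[a(x_1)+\tfrac{1}{\nu}(x_2-\tilde{\mu}_2)^{\sm}\Sigma_{22|1}^{-1}(x_2-\tilde{\mu}_2)\bigr]^{-(\nu+n)/2}$.

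Next, I would integrate out $x_2$ over $\mathbb{R}^{n-p}$. Factoring out $a(x_1)^{-(\nu+n)/2}$ and substituting $y=(x_2-\tilde{\mu}_2)/\sqrt{a(x_1)}$ contributes a Jacobian of $a(x_1)^{(n-p)/2}$; the remaining integral $\int[1+\tfrac{1}{\nu}y^{\sm}\Sigma_{22|1}^{-1}y]^{-(\nu+n)/2}\,dy$ is a finite constant depending only on $\nu$, $n$, $p$, and $|\Sigma_{22|1}|$, and is independent of $x_1$. Combining the two powers of $a(x_1)$ yields $a(x_1)^{-(\nu+p)/2}=[1+\tfrac{1}{\nu}(x_1-\mu_1)^{\sm}\Sigma_{11}^{-1}(x_1-\mu_1)]^{-(\nu+p)/2}$, which is, up to a normalization constant, exactly the density of a $p$-variate $t$-distribution with $\nu$ degrees of freedom, location $\mu_1$, and scale $\Sigma_{11}$.

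Finally, I would reconcile the normalizing constants, verifying that the ratio of the $n$- and $p$-variate $t$-density normalizers agrees with the explicit constant produced by the $y$-integral; this reduces to the determinant identity $|\Sigma|=|\Sigma_{11}||\Sigma_{22|1}|$ together with standard Gamma function identities. The only mild obstacle is the bookkeeping of $\Gamma$-factors, powers of $\pi$, and determinants in this matching step. A conceptually cleaner alternative, which sidesteps the bookkeeping entirely, is the scale-mixture representation $X\stackrel{d}{=}\mu+Z/\sqrt{W/\nu}$ with $Z\sim\mathcal{N}_n(0,\Sigma)$ independent of $W\sim\chi^2_\nu$: projecting onto the first $p$ coordinates gives $X_1\stackrel{d}{=}\mu_1+Z_1/\sqrt{W/\nu}$ with $Z_1\sim\mathcal{N}_p(0,\Sigma_{11})$ by the marginalization property of the multivariate normal, and the claim follows immediately from the definition of the $t$-distribution.
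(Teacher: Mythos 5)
Your argument is correct. Note, however, that the paper does not prove this lemma at all: it is quoted as a known property of multivariate $t$-distributions with a pointer to Chapter 1 of the cited reference on the $t$-distribution, so there is no ``paper proof'' to compare against line by line. Your direct computation is the standard one and checks out: the Schur-complement split of the quadratic form, the substitution $y=(x_2-\tilde{\mu}_2)/\sqrt{a(x_1)}$ with Jacobian $a(x_1)^{(n-p)/2}$, and the resulting exponent $-(\nu+n)/2+(n-p)/2=-(\nu+p)/2$ all combine correctly, and the normalizing constants indeed reconcile via $|\Sigma|=|\Sigma_{11}|\,|\Sigma_{22|1}|$ and the usual Gamma-function identities. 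The scale-mixture representation $X\stackrel{d}{=}\mu+Z/\sqrt{W/\nu}$ that you offer as an alternative is the cleanest route and is essentially how the cited reference treats marginals; it buys you the result with no bookkeeping, at the cost of first establishing (or accepting) that representation of the $t$-distribution. Either version is a complete and valid proof of the statement.
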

Recalling the correspondence of the parameter of the multivariate $q$-normal distribution and the degree of freedom of the multivariate $t$-distribution $q=1+\frac{2}{\nu+n}$, we can obtain the following corollary.
\begin{Corollary}\label{q-free}
Let $X$ be a random vector following an $n$-variate $q$-normal distribution for $1<q<1+\frac{2}{n+2}$. Consider a partition of the mean vector $\mu$ and scale matrix $\Sigma$ in the same way as in \eqref{tmarg}.
Then, $X_{1}$ follows a $p$-variate $\left(1+\frac{2(q-1)}{2-(n-p)(q-1)}\right)$-normal distribution with mean vector $\mu_{1}$ and scale matrix $\Sigma_{11}$, where $p$ is the dimension of $X_{1}$.
\end{Corollary}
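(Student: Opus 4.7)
The plan is to reduce the corollary to the preceding Lemma by invoking the explicit correspondence $q=1+\frac{2}{\nu+n}$ between the parameter of an $n$-variate $q$-normal distribution and the degrees of freedom $\nu$ of the equivalent $n$-variate $t$-distribution, as already recalled in the excerpt. The corollary will then follow from nothing more than a parameter translation, applied twice.

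First, I would define $\nu := \frac{2}{q-1}-n$, so that an $n$-variate $q$-normal with parameters $(\mu,\Sigma)$ is, by the correspondence, exactly an $n$-variate $t$-distribution with $\nu$ degrees of freedom, location vector $\mu$, and scale matrix $\Sigma$. The hypothesis $1<q<1+\frac{2}{n+2}$ gives $\nu>2$, so this $t$-distribution is well-defined and has finite covariance. I would then apply the preceding Lemma to the partitioned random vector $X=(X_1^{\sm},X_2^{\sm})^{\sm}$: it asserts that $X_1$ is a $p$-variate $t$-distribution with the \emph{same} degrees of freedom $\nu$, location vector $\mu_1$, and scale matrix $\Sigma_{11}$.

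Finally, I would translate back via the same correspondence in the opposite direction: a $p$-variate $t$-distribution with $\nu$ degrees of freedom is a $p$-variate $q'$-normal distribution with $q'=1+\frac{2}{\nu+p}$. Substituting $\nu+p=\frac{2}{q-1}-(n-p)=\frac{2-(n-p)(q-1)}{q-1}$ immediately gives
\begin{equation}
q' \;=\; 1+\frac{2(q-1)}{2-(n-p)(q-1)},
\end{equation}
which is the formula claimed in the corollary, with the scale matrix $\Sigma_{11}$ and location $\mu_1$ inherited from the previous step.

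The only point requiring care, and what I expect to be the mildest of obstacles, is confirming that the parameter $q'$ produced falls in the admissible range for a $p$-variate $q'$-normal distribution (so that $X_1$ really is a $q'$-normal distribution in the sense of the paper). Since $1<q<1+\frac{2}{n+2}$ implies $(n-p)(q-1)<\frac{2(n-p)}{n+2}<2$, the denominator $2-(n-p)(q-1)$ is strictly positive, which guarantees $q'>1$; a short algebraic check of the same flavor yields $q'<1+\frac{2}{p+2}$, so the resulting $q'$-normal distribution lies in the range where it corresponds to a genuine $p$-variate $t$-distribution. Beyond that, the proof is pure bookkeeping in the parameter change.
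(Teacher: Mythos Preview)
Your proposal is correct and follows exactly the route the paper indicates: the paper does not give a separate proof of this corollary but simply remarks that it is obtained by ``recalling the correspondence of the parameter of the multivariate $q$-normal distribution and the degree of freedom of the multivariate $t$-distribution $q=1+\frac{2}{\nu+n}$,'' which is precisely the two-way parameter translation you carry out. Your additional range check for $q'$ is a welcome clarification that the paper leaves implicit.
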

\begin{Theorem}\label{thm:tsallis}
Let $P\sim\mathcal{N}_{q}({\mu_1},\Sigma_1),Q\sim\mathcal{N}_{q}({\mu_2},\Sigma_2)$ be $n$-variate $q$-normal distributions for $1<q<1+\frac{2}{n+2}$ and $\tilde{q}=-\frac{2(q-1)}{ 2-n(q-1)}$; consider the Tsallis entropy-regularized optimal transport:
\begin{equation}
C_{\lambda}(P,Q)=\inf_{\pi\in\Pi(P,Q)}\int_{\mathbb{R}^{n}\times \mathbb{R}^{n}}\|{x}-{y}\|^2\pi({x},{y})d{x}d{y}-2\lambda S_{1+\tilde{q}}(\pi).
\label{siki:daen_frac}
\end{equation}

Then, there exists a unique $\tilde{\lambda}=\tilde{\lambda}(q,\Sigma_{1},\Sigma_{2},\lambda)\in\mathbb{R}_{+}$ such that the optimal coupling $\pi$ of the entropy-regularized optimal transport
is expressed as:
\begin{equation}
\pi\sim\mathcal{N}_{1-\tilde{q}}\left(\begin{pmatrix}
{\mu}_1\\
{\mu}_2 \\
\end{pmatrix},
\begin{pmatrix}
\Sigma_1 &\Sigma_{\tilde{\lambda}} \\
\Sigma_{\tilde{\lambda}}^{\sm} & \Sigma_2 \\
\end{pmatrix}\right),
\end{equation}
where:
\begin{equation}
\Sigma_{\tilde{\lambda}}:=\Sigma_1^{1/2}(\Sigma_1^{1/2}\Sigma_2\Sigma_1^{1/2}+{\tilde{\lambda}}^2I)^{1/2}\Sigma_1^{-1/2}-{\tilde{\lambda}}I.
\label{daensigma}
\end{equation}
\end{Theorem}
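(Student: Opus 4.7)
The plan is to parallel the proof of Theorem \ref{main}, modifying only the two places where the Shannon entropy and the multivariate normal assumption enter.

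First I will set up the Lagrangian with the marginal constraints and the Tsallis regulariser $-2\lambda S_{1+\tilde q}(\pi)$. Since $S_{1+\tilde q}(\pi)=\tilde q^{-1}(1-\int\pi^{1+\tilde q}\,dxdy)$, its functional derivative is $-(1+\tilde q)\pi^{\tilde q}/\tilde q$, so $\delta L/\delta\pi=0$ yields
\[
\pi(x,y)\propto\bigl[\alpha(x)+\beta(y)-\|x-y\|^{2}\bigr]^{1/\tilde q},
\]
the $q$-exponential analogue of equation \eqref{siki:lagmoto}. Next, Lemma \ref{mep_tsa} shows that, under fixed mean and covariance, $S_{1+\tilde q}$ is maximised by a $(1-\tilde q)$-normal distribution; this justifies the ansatz
\[
\pi\sim\mathcal{N}_{1-\tilde q}\!\left(\begin{pmatrix}\mu_1\\ \mu_2\end{pmatrix},\begin{pmatrix}\Sigma_1 & \Sigma_{\tilde\lambda}\\ \Sigma_{\tilde\lambda}^{\sm} & \Sigma_2\end{pmatrix}\right),
\]
and Corollary \ref{q-free} confirms that such a joint has $q$-normal marginals with scale matrices $\Sigma_1,\Sigma_2$---the index relation defining $\tilde q$ is exactly the consistency condition produced by that corollary.

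I will then match the $q$-exponential density $C_{1-\tilde q}^{-1}\bigl[1-\tilde q\,(z-\mu_{\mathrm{joint}})^{\sm}\Sigma_{\mathrm{joint}}^{-1}(z-\mu_{\mathrm{joint}})\bigr]^{1/\tilde q}$ to the first-order form by raising both sides to the $\tilde q$-th power and comparing coefficients. Only the cross term $2x^{\sm}y$ in $-\|x-y\|^{2}$ couples $x$ and $y$; the pure quadratic pieces in $x$ or $y$ alone are absorbed by $\alpha(x)$ and $\beta(y)$. Applying the block matrix inversion formula exactly as in the derivation of \eqref{eq:sym} reduces the problem to the same symmetric quadratic matrix equation
\[
\tilde\lambda\,\Sigma_1^{-1}\Sigma_{\tilde\lambda}+\tilde\lambda\,\Sigma_{\tilde\lambda}^{\sm}\Sigma_1^{-1}=\Sigma_2-\Sigma_{\tilde\lambda}^{\sm}\Sigma_1^{-1}\Sigma_{\tilde\lambda},
\]
but now with an effective parameter $\tilde\lambda=\lambda(1+\tilde q)/C_{1-\tilde q}(\Sigma_{\mathrm{joint}})^{\tilde q}$ rather than $\lambda$ itself. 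Completing the square and invoking uniqueness of the polar decomposition, as in the passage from \eqref{eq:quad} to \eqref{eq:orth}, then produces the closed form \eqref{daensigma}.

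The main obstacle is the final step: showing that a unique $\tilde\lambda\in\mathbb{R}_{+}$ exists. The matching above leaves an implicit scalar equation relating $\tilde\lambda$ to $\lambda$ through the normalising constant $C_{1-\tilde q}(\Sigma_{\mathrm{joint}})$, which via $|\Sigma_{\mathrm{joint}}|=|\Sigma_1|\cdot|\Sigma_2-\Sigma_{\tilde\lambda}^{\sm}\Sigma_1^{-1}\Sigma_{\tilde\lambda}|$ is itself an explicit function of $\tilde\lambda$ through \eqref{daensigma}. Unlike the Gaussian case, where $\tilde\lambda=\lambda$ falls out immediately, one must here establish existence and uniqueness of a positive solution. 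My plan is to differentiate in $\tilde\lambda$ and show that the left-hand side of this scalar equation is continuous and strictly monotonic in $\tilde\lambda$ with range $(0,\infty)$; the argument follows the spirit of Corollary \ref{cor:mono}, exploiting the fact that the eigenvalues of $\Sigma_{\tilde\lambda}$ are the decreasing functions $\sqrt{\nu_{0,i}+\tilde\lambda^{2}}-\tilde\lambda$ of $\tilde\lambda$.
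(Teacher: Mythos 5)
Your proposal follows essentially the same route as the paper's proof: the Lagrangian with the Tsallis regulariser, the functional derivative giving the $q$-exponential form, Lemma \ref{mep_tsa} and Corollary \ref{q-free} to justify the $\mathcal{N}_{1-\tilde q}$ ansatz with the prescribed marginals, the same block-inversion and completing-the-square reduction to \eqref{daensigma} with an effective parameter $\tilde\lambda$, and finally a scalar monotonicity argument for existence and uniqueness of $\tilde\lambda$. The only cosmetic difference is that the paper phrases the last step in terms of the separated normalising constant $c$ (with $\tilde\lambda\propto c$) and the monotonicity of $\log\bigl(c^{1/\tilde q}K_{\tilde q}|\Sigma|\bigr)$ using $\tilde q<0$, whereas you work directly in $\tilde\lambda$; the content is the same.
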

\begin{proof}
The proof proceeds in a similar way as in Theorem \ref{main}. Let $\alpha\in L(P)$ and $\beta\in L(Q)$ be the Lagrangian multipliers. Then, the Lagrangian function $L(\pi,\alpha,\beta)$ of \eqref{siki:constraint_daen} is defined~as:
\begin{align}
L(\pi,\alpha,\beta):=&\int\|x-y\|^2\pi(x,y)dxdy-2\lambda \left\{\frac{1}{\tilde{q}}\left(1-\int\pi(x,y)^{1+\tilde{q}}dxdy\right)\right\}\nonumber\\
&-\int\alpha(x)\pi(x,y)dxdy+\int\alpha(x)p(x)dx\nonumber\\
&-\int\beta(y)\pi(x,y)dxdy+\int\beta(y)q(y)dy\label{lag}
\end{align}
and the extremum of the Tsallis entropy-regularized optimal transport is obtained by the functional derivative with respect to $\pi$,
\begin{equation}
\pi(x,y)=\left(\frac{\tilde{q}}{{2(\tilde{q}+1)\lambda}}\left(-\alpha({x})-\beta({y})+\|{x}-{y}\|^2\right)\right)^\frac{1}{\tilde{q}}.
\end{equation}

Here, $\alpha$ and $\beta$ are quadratic polynomials by Lemma \ref{mep_tsa}.
To separate the normalizing constant, we introduce a constant $c\in\mathbb{R}_{+}$, and $\pi$ can be written as:
\begin{equation}
\pi(x,y)=c^\frac{1}{\tilde{q}}\left(\tilde{\alpha}(x)+\tilde{\beta}({y})+\frac{\tilde{q}\|{x}-{y}\|^2}{2c(\tilde{q}+1)\lambda}\right)^\frac{1}{\tilde{q}},
\end{equation}
with quadratic functions $\tilde{\alpha}(x)$ and $\tilde{\beta}(y)$.

Let $\tilde{\lambda} = \frac{c(\tilde{q}+1)\lambda}{{\tilde{q}}}>0$. Then, by the same argument as in the proof of Theorem \ref{main} and using Corollary \ref{q-free}, we obtain the scale matrix of $\pi$ as:
\begin{equation}
\Sigma=\begin{pmatrix}
\Sigma_1 &\Sigma_{\tilde{\lambda}} \\
\Sigma_{\tilde{\lambda}}^{\sm} & \Sigma_2 \\
\end{pmatrix},
\end{equation}
where:
\begin{equation}
\Sigma_{\tilde{\lambda}}=\Sigma_1^{1/2}(\Sigma_1^{1/2}\Sigma_2\Sigma_1^{1/2}+\tilde{\lambda}^2I)^{1/2}\Sigma_1^{-1/2}-{\tilde{\lambda}}I.
\end{equation}

Let $z=(x^{\sm},y^{\sm})^{\sm}$ and $K_{\tilde{q}}=\int(1+z^{\sm}z)^\frac{1}{\tilde{q}}dz$; $\pi$ can be written as:
\begin{equation}
\pi(x,y) =\frac{1}{K_{\tilde{q}}|\Sigma|} (1+z^{\sm} \Sigma^{-1}z)^{\frac{1}{\tilde{q}}}.
\end{equation}

The constant $c$ is determined by:
\begin{equation}
\frac{1}{K_{\tilde{q}}|\Sigma|}=c^\frac{1}{\tilde{q}}.
\label{unic}
\end{equation}

We will show that the above equation has a unique solution. 
Let $\{\tau\}_{i=1}^{n}$ be the eigenvalues of $(\Sigma_{1}^{1/2}\Sigma_{2}\Sigma_{1}^{1/2})^{1/2}$; $|\Sigma|$ can be expressed as 
$\prod_{i=1}^{2n}2\tilde{\lambda}(\sqrt{\tau_{i}^{2}+\tilde{\lambda}^{2}}-\tilde{\lambda})$.
We~consider: 
\begin{align}
f(c) &= \log(c^{\frac{1}{\tilde{q}}}{K_{\tilde{q}}|\Sigma|})\\
&= \frac{1}{\tilde{q}}\log c +\sum_{i=1}^{2n} \log(\sqrt{\tau_{i}^{2}+\tilde{\lambda}^{2}}-\tilde{\lambda}) +2n\log(2\tilde{\lambda})+\log K_{\tilde{q}}.
\end{align}

Because $\tilde{q}<0$, $f(c)$ is a monotonic decreasing function, and $\lim_{c\downarrow 0}f(c)=\infty$, $\lim_{c\to \infty}f(c)=-\infty$, \eqref{unic} has a unique positive solution, and $\tilde{\lambda}$ is determined uniquely.
\end{proof}
\section{Entropy-Regularized Kantorovich Estimator}
\label{Entropy-Regularized Kantorovich Estimator}
Many estimators are defined by minimizing the divergence or distance $\rho$ between probability measures, that is $\arg\min_{\mu}\rho(\mu,\nu)$ for a fixed $\nu$. When $\rho$ is the Kullback--Leibler divergence, the estimator corresponds to the maximum likelihood estimator. When $\rho$ is the Wasserstein distance, the following estimator is called the minimum Kantorovich estimator, according to \cite{copcuturi}.
In this section, we consider a probability measure $Q^{*}$ that minimizes $C_{\lambda}(P,Q)$ for a fixed $P$ over $\mathcal{P}_{2}(\mathbb{R}^{n})$, the set of all probability measures on $\mathbb{R}^{n}$ with finite second moment that are absolutely continuous with respect to the Lebesgue measure.
In other words, we define the entropy-regularized Kantorovich estimator $\arg\min_{Q\in \mathcal{P}_{2}(\mathbb{R}^{n})} C_{\lambda}(P,Q).$
The entropy-regularized Kantorovich estimator for discrete probability measures was studied in \cite{amari_new}, Theorem 2.
We obtain the entropy-regularized Kantorovich estimator for continuous probability measures in the following theorem: 
\begin{Theorem} \label{thm:mke}
For a fixed $P \in \mathcal{P}_{2}(\mathbb{R}^{n})$,
\begin{equation}
Q^{*}=\arg\min_{Q\in \mathcal{P}_{2}(\mathbb{R}^{n})} C_{\lambda}(P,Q)
\end{equation}
exists, and its density function can be written as:
\begin{equation}
dQ^{*}=dP\star\phi_{\lambda},
\end{equation}
where $\phi_{\lambda}(x)$ is a density function of $\mathcal{N}(0,\frac\lambda2I)$, and $\star$ denotes the convolution operator.
\end{Theorem}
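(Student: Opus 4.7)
The plan is to collapse the two-stage minimization $\min_{Q}\inf_{\pi\in\Pi(P,Q)}$ into a single infimum over couplings with a fixed first marginal. Because the second marginal of any admissible $\pi$ is automatically a valid $Q$, and every $\pi\in\Pi(P,Q)$ has first marginal $P$, the outer minimization over $Q$ simply disappears:
\begin{equation}
\min_{Q\in\mathcal{P}_{2}(\mathbb{R}^{n})} C_{\lambda}(P,Q)
=\inf_{\pi:\,\pi_{1}=P}\left\{\int\|x-y\|^{2}\pi(x,y)\,dx\,dy-\lambda\,\Ent(\pi)\right\},
\end{equation}
where $\pi_{1}$ denotes the first marginal of $\pi$. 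This eliminates the dependence on $Q$ and leaves a single convex variational problem.

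Next I would factorize $\pi(x,y)=p(x)\,k(y\mid x)$ with $k(\cdot\mid x)$ the conditional density, and apply the chain rule for the Shannon entropy:
\begin{equation}
\Ent(\pi)=\Ent(p)+\int p(x)\,h\!\left(k(\cdot\mid x)\right)dx,
\end{equation}
where $h$ is the differential entropy. Since $\Ent(p)$ is a fixed constant, the problem reduces to the pointwise minimization, for each $x$ separately, of
\begin{equation}
F_{x}(k):=\int\|x-y\|^{2}\,k(y)\,dy-\lambda\,h(k)
\end{equation}
over probability densities $k$ on $\mathbb{R}^{n}$. Strict convexity of $-h$, together with the linear-in-$k$ cost term, ensures $F_{x}$ is strictly convex, so any stationary point is the unique global minimizer.

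A Lagrange calculation in the spirit of the proof of Theorem \ref{main}, with a single multiplier for $\int k=1$, gives the Euler--Lagrange equation $\|x-y\|^{2}+\lambda\log k(y)+\text{const}=0$, whence
\begin{equation}
k^{\ast}(y\mid x)\propto\exp\!\bigl(-\|x-y\|^{2}/\lambda\bigr),
\end{equation}
which after normalization is precisely $\phi_{\lambda}(y-x)$, the density of $\mathcal{N}(x,\tfrac{\lambda}{2}I)$. The second marginal of $\pi^{\ast}(x,y)=p(x)\,\phi_{\lambda}(y-x)$ is then
\begin{equation}
q^{\ast}(y)=\int p(x)\,\phi_{\lambda}(y-x)\,dx=(p\star\phi_{\lambda})(y),
\end{equation}
which is the claimed form of $dQ^{\ast}$.

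The main obstacles I expect are technical rather than conceptual. First, I must justify that assembling the pointwise minimizers $\{k^{\ast}(\cdot\mid x)\}_{x\in\mathbb{R}^{n}}$ yields a jointly measurable $\pi^{\ast}$ and that Fubini applies both to the transport cost and to $\Ent(\pi^{\ast})$; here the explicit Gaussian form of $k^{\ast}(\cdot\mid x)$ makes measurability in $x$ transparent. Second, I must verify that $Q^{\ast}\in\mathcal{P}_{2}(\mathbb{R}^{n})$, which follows from the fact that a convolution of two probability measures with finite second moments again has a finite second moment (this uses $P\in\mathcal{P}_{2}$ and the finite variance of $\phi_{\lambda}$). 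Finally, existence and uniqueness follow from strict convexity together with coercivity provided by the quadratic transport cost, which prevents escape of mass to infinity.
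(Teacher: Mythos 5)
Your proof is correct, and it takes a genuinely different route from the paper. The paper works in the dual: it invokes the dual formulation $\mathcal{A}_{\lambda}(P,Q)$ of the entropy-regularized problem (Lemma \ref{thmdual}), takes the first variation of $\mathcal{A}_{\lambda}(P,Q^{*})$ with respect to the density $q^{*}$ at the minimizer to conclude that the dual potential $\beta^{*}$ vanishes (strictly, is constant, which can be absorbed into $\alpha^{*}$), and then reads off $\pi^{*}(x,y)=e^{\alpha^{*}(x)/\lambda}e^{-\|x-y\|^{2}/\lambda}$ and its $y$-marginal. You instead stay entirely in the primal: you collapse $\min_{Q}\inf_{\pi\in\Pi(P,Q)}$ into a single infimum over couplings with first marginal $P$, disintegrate $\pi=p(x)k(y\mid x)$, use the entropy chain rule to decouple the problem into independent Gibbs-type minimizations in $y$ for each $x$, and solve each by Euler--Lagrange. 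Both routes land on the same $\pi^{*}(x,y)=p(x)\phi_{\lambda}(y-x)$ and hence $q^{*}=p\star\phi_{\lambda}$. Your approach has the advantage of being constructive --- it exhibits the minimizer rather than presupposing its existence as the paper's proof does, and it does not rely on strong duality; the Gibbs variational identity $F_{x}(k)-F_{x}(k^{*})=\lambda\,\mathrm{KL}(k\,\|\,k^{*})\geq 0$ would make the pointwise optimality airtight. The paper's dual argument is shorter once Lemma \ref{thmdual} is available and isolates the structurally interesting fact that minimizing over the second marginal kills the corresponding dual potential. One small point to tidy in your write-up: the union of $\Pi(P,Q)$ over $Q\in\mathcal{P}_{2}(\mathbb{R}^{n})$ is the set of couplings whose first marginal is $P$ \emph{and} whose second marginal is absolutely continuous with finite second moment, so your unconstrained relaxation is a priori larger; this is harmless because, as you note at the end, the unconstrained minimizer's second marginal $p\star\phi_{\lambda}$ does lie in $\mathcal{P}_{2}(\mathbb{R}^{n})$, so the two infima coincide and are attained in the admissible class.
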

We use the dual problem of the entropy-regularized optimal transport to prove \mbox{Theorem \ref{thm:mke}} (for details, see Proposition 2.1 of \cite{stosink} or Section 3 of \cite{clason2021entropic}).
\begin{Lemma}
\label{thmdual}
The dual problem of entropy-regularized optimal transport can be written as:
\begin{align}
\mathcal{A}_{\lambda}(P,Q)=\sup_{\substack{\alpha\in L_{1}(P)\\\beta\in L_{1}(Q)}}&\int \alpha(x)p(x)dx+\int \beta(y)q(y)dy\nonumber\\
&-\lambda\int\exp\left\{\frac{\alpha(x)+\beta(y)-\|x-y\|^{2}}{\lambda}\right\}dxdy\label{dualthm}.
\end{align}

Moreover, $\mathcal{A}_{\lambda}(P,Q)=C_{\lambda}(P,Q)$ holds.
\end{Lemma}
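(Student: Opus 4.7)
The plan is to derive the dual formula by Lagrangian calculus and then establish strong duality $\mathcal{A}_\lambda = C_\lambda$ via Fenchel--Rockafellar duality, following the standard treatment in \cite{stosink, clason2021entropic}.

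First, I would introduce dual variables $\alpha \in L_1(P)$ and $\beta \in L_1(Q)$ for the two marginal constraints $\int \pi(x,y)dy = p(x)$ and $\int \pi(x,y)dx = q(y)$, and form the Lagrangian
\begin{equation}
L(\pi,\alpha,\beta) = \int \|x-y\|^2 \pi \, dxdy + \lambda \int \pi \log \pi \, dxdy - \int \alpha(x)\bigl(\textstyle\int \pi\, dy - p(x)\bigr)dx - \int \beta(y)\bigl(\int \pi\, dx - q(y)\bigr)dy.
\end{equation}
Pointwise minimization of $L$ in $\pi \geq 0$ gives the first-order condition $\|x-y\|^2 + \lambda(\log \pi + 1) - \alpha(x) - \beta(y) = 0$, so the minimizer has the Gibbs form $\pi^\star(x,y) \propto \exp\{(\alpha(x)+\beta(y)-\|x-y\|^2)/\lambda\}$. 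Substituting $\pi^\star$ back into $L$ and simplifying collapses the Lagrangian to $\int \alpha p + \int \beta q - \lambda \int \exp\{(\alpha+\beta-\|x-y\|^2)/\lambda\}\, dxdy$, up to an additive constant proportional to $\int \pi^\star$ which can be absorbed by a constant shift of the potentials. Taking the supremum over $(\alpha,\beta)$ then produces the advertised formula for $\mathcal{A}_\lambda(P,Q)$.

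Second, to justify the exchange of infimum and supremum and thereby establish $\mathcal{A}_\lambda = C_\lambda$, I would apply Fenchel--Rockafellar duality to the convex, lower semicontinuous primal functional $\pi \mapsto \int \|x-y\|^2 \pi + \lambda \int \pi \log \pi$ subject to the linear marginal constraint $\pi \in \Pi(P,Q)$. The independent coupling $\pi_0 = p \otimes q$ serves as a constraint-qualifying feasible point: since $P,Q \in \mathcal{P}_2(\mathbb{R}^n)$ its transport cost is finite, and its relative entropy with respect to Lebesgue is $-\mathrm{Ent}(P)-\mathrm{Ent}(Q)$, again finite under the standing absolute continuity assumption. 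Strict convexity of the Shannon entropy term guarantees uniqueness of the primal optimizer, and complementary slackness then identifies it as the Gibbs coupling $\pi^\star$ with $(\alpha^\star,\beta^\star)$ attaining the dual supremum, so weak duality $\mathcal{A}_\lambda \leq C_\lambda$ becomes an equality.

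The main obstacle is the functional-analytic rigor in the unbounded continuous setting: the potentials a priori live only in $L_1(P)\times L_1(Q)$, yet the dual objective involves the exponential $\exp\{(\alpha+\beta-\|x-y\|^2)/\lambda\}$, whose integrability and coercivity in $(\alpha,\beta)$ must be controlled (for instance by the growth $\|x-y\|^2$ together with the $\mathcal{P}_2$ assumption). Verifying these conditions, and thus that the sup--inf swap is legitimate and the dual is attained, is precisely the content of Proposition 2.1 of \cite{stosink} and Section 3 of \cite{clason2021entropic}, to which I would defer for the remaining technical details.
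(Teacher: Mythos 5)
The paper offers no proof of this lemma at all: it states it and defers entirely to Proposition 2.1 of \cite{stosink} and Section 3 of \cite{clason2021entropic}, so your sketch goes beyond what the paper provides while ultimately resting on the same two references for the hard analysis. Your route --- formal Lagrangian derivation of the Gibbs form of the minimizer, then Fenchel--Rockafellar duality with the independent coupling $p\otimes q$ as the constraint-qualifying point --- is the standard one and is consistent with how those references argue, so as an outline it is sound. Two details deserve more care than you give them. First, the constant bookkeeping: the pointwise first-order condition yields $\pi^{\star}=e^{-1}\exp\{(\alpha(x)+\beta(y)-\|x-y\|^{2})/\lambda\}$, and the leftover term $-\lambda\int\pi^{\star}$ cannot simply be ``absorbed by a constant shift of the potentials'': replacing $\alpha$ by $\alpha+\lambda$ adds $\lambda$ to $\int\alpha\,dP$ while multiplying the exponential integral by $e$, so the Lagrangian dual comes out equal to the displayed $\mathcal{A}_{\lambda}$ \emph{plus} $\lambda$. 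This matches the ``$+\varepsilon$'' that appears in Proposition 2.1 of \cite{stosink}, and it means the identity $\mathcal{A}_{\lambda}=C_{\lambda}$ as displayed holds only up to that additive constant (an imprecision already present in the lemma as stated in the paper, but your derivation should surface it rather than wave it away, since it is harmless for the argmin application in Theorem \ref{thm:mke} but not literally an equality). Second, absolute continuity plus finite second moment bounds $\mathrm{Ent}(P)$ from above (by the Gaussian of the same covariance) but does not prevent $\int p\log p=+\infty$, so the feasibility of $p\otimes q$ with finite primal value requires the additional assumption that the entropies of $P$ and $Q$ are finite. Neither point breaks the approach; both are exactly the technicalities the cited references handle and that your write-up currently glosses over.
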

Now, we prove Theorem \ref{thm:mke}.
\begin{proof}

Let $Q^{*}$ be the minimizer of $\min_{Q}C_{\lambda}(P,Q)$. Applying Lemma \ref{thmdual}, there exist \mbox{$\alpha^{*}\in L_{1}(P)$} and $\beta^{*}\in L_{1}(Q^{*})$ such that: 
\begin{align}
C_{\lambda}(P,Q^{*})=\mathcal{A}_{\lambda}(P,Q^{*})&=\int \alpha^{*}(x)p(x)dx+\int \beta^{*}(y)q^{*}(y)dy\nonumber\\
&-\lambda\int\exp\left\{\frac{\alpha^{*}(x)+\beta^{*}(y)-\|x-y\|^{2}}{\lambda}\right\}dxdy.
\label{mindual}
\end{align}

Now, $\mathcal{A}_{\lambda}(P,Q^{*})$ is the minimum value of $\mathcal{A}_{\lambda}$, such that the variation $\delta\mathcal{A}_{\lambda}(P,Q^{*})$ is always zero. Then,
\begin{equation}
\delta\mathcal{A}_{\lambda}(P,Q^{*})=\int \beta^{*}(y)\delta q^{*}(y)dy=0\Rightarrow \beta^{*}\equiv0
\end{equation}
holds, and the optimal coupling of $P,Q$ can be written as:
\begin{align}
\pi^{*}(x,y)&=\exp\left\{\frac{\alpha^{*}(x)+\beta^{*}(y)}{\lambda}-\frac{\|x-y\|^{2}}{\lambda}\right\}\\
&=\exp\left\{\frac{\alpha^{*}(x)}{\lambda}\right\}\exp\left\{-\frac{\|x-y\|^{2}}{\lambda}\right\}.
\end{align}

Moreover, we can obtain a closed-form of $\alpha^{*}(x)$ as follows from the equation $\int \pi(x,y) dy$ $=p(x)$:
\begin{equation}
\frac{\alpha^{*}(x)}{\lambda}=\log p(x)-\log\int\exp\left\{-\frac{\|x-y\|^{2}}{\lambda}\right\}dy=\log p(x)-\frac n2\log(\pi\lambda).
\end{equation}

Then, by calculating the marginal distribution of $\pi(x,y)$ with respect to $x$, \mbox{we can obtain}: 
\begin{equation}
q^{*}(y)=\int\frac{1}{(\pi{\lambda})^{\frac n2}}\exp\left\{-\frac{\|x-y\|^{2}}{\lambda}\right\}p(x)dx=(p\star\phi_{\lambda})(y).
\label{opq}
\end{equation}

Therefore, we conclude that a probability measure $Q$ that minimizes $C_{\lambda}(P,Q)$ is expressed as \eqref{opq}.
\end{proof}

It should be noted that when $P$ in Theorem \ref{thm:mke} are multivariate normal distributions, $Q^*$ and $P$ are simultaneously diagonalizable by a direct consequence of the theorem. This~is consistent with the result of Corollary \ref{cor:1}(1) for minimization when all eigenvalues are fixed.

We can determine that the entropy-regularized Kantorovich estimator is a measure convolved with an isotropic multivariate normal distribution scaled by the regularization parameter $\lambda$. This is similar to the idea of prior distributions in the context of Bayesian inference. Applying Theorem \ref{thm:mke}, the entropy-regularized Kantorovich estimator of the multivariate normal distribution $ \mathcal{N}(\mu,\Sigma)$ is $ \mathcal{N}(\mu,\Sigma+\frac{\lambda}{2}I)$.

\section{Numerical Experiments}
\label{Numerical Experiments}
In this section, we introduce experiments that show the statistical efficiency of entropy regularization in Gaussian settings. We consider two different setups, estimating covariance matrices (Section \ref{Estimation of Covariance Matrices}) and the entropy-regularized Wasserstein barycenter (Section \ref{Estimation of the Wasserstein Barycenter}). To obtain the entropy-regularized Wasserstein barycenter, we adopt the Newton--Schulz method and a manifold optimization method, which are explained in Sections \ref{Section6.3} and \ref{Approximate the Matrix Square Root}, respectively.
\subsection{Estimation of Covariance Matrices}
\label{Estimation of Covariance Matrices}
We provide a covariance estimation method based on entropy-regularized optimal transport. Let $P=\mathcal{N}(\mu,\Sigma)$ be an $n$-variate normal distribution. We define an entropy-regularized Kantorovich estimator $\hat{P}_{\lambda}$, that is, 
\begin{equation}
\hat{P}_{\lambda}=\arg\min_{Q} C_\lambda(P,Q).
\end{equation}

We generate some samples from $\mathcal{N}(\mu,\Sigma)$ and estimate the mean and covariance matrix.
We compare the maximum likelihood estimator $\hat{P}_{\mathrm{MLE}}=\mathcal{N}(\hat{\mu}_{\mathrm{MLE}},\hat{\Sigma}_{\mathrm{MLE}})$ and $\hat{P}_{\lambda}$ with respect to the prediction error:
\begin{equation}
\mathrm{KL}(P,\hat{P}_{\mathrm{MLE}}),\ \ \mathrm{KL}(P,\hat{P}_{\lambda}).
\end{equation}

In our experiment, the dimension $n$ is set to $5,15,30$, and the sample size is set to $60,120$. The experiment proceeds as follows.
\begin{enumerate}
 \item Obtain a random sample of size 60 (or 120) from $\mathcal{N}(0,\Sigma)$ and its sample covariance matrix $\hat{\Sigma}$.
 \item Obtain the entropy-regularized minimum Kantorovich estimator of $\hat{\Sigma}$ obtained in Step 1.
 \item Compute the prediction error between $\Sigma$ and the entropy-regularized minimum Kantorovich estimator of $\hat{\Sigma}$
 \item Repeat the above steps 1000 times and obtain a confidence interval of the prediction~error.
\end{enumerate}

Table \ref{tab:mle} shows the average prediction error of the MLE and entropy-regularized Kantorovich estimator of covariance matrices from 60 samples from an n-variate normal distribution with the 95\% confidential interval. We can see that the prediction error is smaller than the maximum likelihood estimator under adequately small $\lambda$ for $n=15,30$, but not for $n=5$. Moreover, the decrease in the prediction error is larger for $n=30$ than for $n=15$, which indicates that the entropy regularization is effective in a high dimension.
On the other hand, Table \ref{tab:mle2} shows in all cases that the decreases in the prediction error are more moderate than Table \ref{tab:mle}. We can see that this is due to the increase in the sample size. 
Then, we can conclude that the entropy regularization is effective in a high-dimensional setting with a small sample size.

\begin{table}[H]
\caption{Average prediction error of the MLE and entropy-regularized Kantorovich estimator of covariance matrices from 60 samples from an $n$-variate normal distribution with the 95\% confidential~interval.}
\begin{center}
\begin{tabular*}{\hsize}{@{}@{\extracolsep{\fill}}lcccc@{}}
\hline
\boldmath{$\lambda$}&\boldmath{$\mathrm{KL}(P,\hat{P}_{\mathrm{W}}),n=5$}&\boldmath{$\mathrm{KL}(P,\hat{P}_{\mathrm{W}}),n=15$}&\boldmath{$\mathrm{KL}(P,\hat{P}_{\mathrm{W}}),n=30$} \\ \hline
0(MLE) &0.062 $\pm\ 0.005$&1.346 $\pm\ 0.022$& 10.69 $\pm\ 0.112$ \\
0.01 &0.051 $\pm\ 0.005$&1.242 $\pm\ 0.021$& 8.973 $\pm \ 0.087$ \\
0.1 &0.104 $\pm\ 0.004$&0.841 $\pm\ 0.013$& 4.180 $\pm \ 0.033$ \\
0.5 &0.647 $\pm\ 0.003$&0.931 $\pm\ 0.007$& 3.093 $\pm \ 0.010$ \\
1.0 &1.166 $\pm\ 0.003$&1.670 $\pm\ 0.006$& 5.075 $\pm \ 0.009$ \\ \hline
\end{tabular*}
\end{center}
\label{tab:mle}
\end{table}

\vspace{-12pt}

\begin{table}[H]
\caption{Average prediction error of the MLE and entropy-regularized Kantorovich estimator of covariance matrices from 120 samples from an $n$-variate normal distribution with the 95\% confidential~interval.}
\begin{tabular*}{\hsize}{@{}@{\extracolsep{\fill}}lcccc@{}}
\hline
\boldmath{$\lambda$}&\boldmath{$\mathrm{KL}(P,\hat{P}_{\mathrm{W}}),n=5$}&\boldmath{$\mathrm{KL}(P,\hat{P}_{\mathrm{W}}),n=15$}&\boldmath{$\mathrm{KL}(P,\hat{P}_{\mathrm{W}}),n=30$} \\ \hline
0(MLE) &0.024 $\pm\ 0.002$&0.490 $\pm\ 0.007$& 2.810 $\pm\ 0.021$ \\
0.01 &0.020 $\pm\ 0.002$&0.459 $\pm\ 0.006$& 2.528 $\pm \ 0.018$ \\
0.1 &0.101 $\pm\ 0.002$&0.397 $\pm\ 0.005$& 1.700 $\pm \ 0.001$ \\
0.5 &0.659 $\pm\ 0.002$&0.875 $\pm\ 0.004$& 2.833 $\pm \ 0.005$ \\
1.0 &1.180 $\pm\ 0.002$&1.730 $\pm\ 0.004$& 5.124 $\pm \ 0.005$ \\ \hline
\end{tabular*}
\label{tab:mle2}
\end{table}

\subsection{Estimation of the Wasserstein Barycenter}
\label{Estimation of the Wasserstein Barycenter}
A barycenter with respect to the Wasserstein distance is definable \cite{barycenters} and is widely used for image interpolation and 3D object interpolation tasks with entropy regularization~\cite{amari_new,conv}.
\begin{Definition}
Let $\{Q_{i}\}_{i=1}^{m}$ be a set of probability measures in $\mathcal{P}(\mathbb{R}^{n})$. The barycenter with respect to $C_{\lambda}$ (entropy-regularized Wasserstein barycenter) is defined as:
\begin{equation}
\arg\min_{P\in\mathcal{P}(\mathbb{R}^n)}\sum_{i=1}^{m} C_{\lambda}(P,Q_{i}).
\label{eq:barycenter}
\end{equation}
\end{Definition}
Now, we restrict $P$ and $\{Q_{i}\}_{i=1}^{m}$ to be multivariate normal distributions and apply our theorem to illustrate the effect of entropy regularization.

The experiment proceeds as follows. The dimensionality and the sample size were set the same as in the experiments in Section \ref{Estimation of Covariance Matrices}.
\begin{enumerate}
 \item Obtain a random sample of size 60 (or 120) from $\mathcal{N}(0,\Sigma)$ and its sample covariance matrix $\hat{\Sigma}$.
 \item Repeat Step 1 three times, and obtain $\{\hat{\Sigma}\}_{i=1}^{3}$.
 \item Obtain the barycenter of $\{\hat{\Sigma}_i\}_{i=1}^{3}$.
 \item Compute the prediction error between $\Sigma$ and the barycenter obtained in step 3.
 \item Repeat the above steps 100 times and obtain a confidence interval of the prediction~error.
 \end{enumerate}

We show the results for several values of the regularization parameter $\lambda$ in \mbox{Tables \ref{tab:bary} and \ref{tab:bary2}}. A decrease in the prediction error can be seen in Table \ref{tab:bary} for $n=30$, as well as \mbox{Tables \ref{tab:mle} and \ref{tab:mle2}}. However, because the computation of the entropy-regularized Wasserstein barycenter uses more data than that of the minimum Kantorovich estimator, the decrease in the prediction error is mild. The entropy-regularized Kantorovich estimator is a special case of the entropy-regularized Wasserstein barycenter \eqref{eq:barycenter} for $m=1$. Our experiments show that the appropriate range of $\lambda$ to decrease the prediction error depends on $m$ and becomes narrow as $m$ increases. 
In addition, we note that there is a small decrease in the prediction error in Table \ref{tab:bary2} for $n=30$.

\begin{table}[H]
\caption{Average prediction error of the entropy-regularized barycenter with the 95\% confidential interval (random sample of size 60).}
\begin{tabular*}{\hsize}{@{}@{\extracolsep{\fill}}lcccc@{}}
\hline
\boldmath{$\lambda$}&\boldmath{$\mathrm{KL}(P,\hat{P}_{\mathrm{W}}),n=5$}&\boldmath{$\mathrm{KL}(P,\hat{P}_{\mathrm{W}}),n=15$}&\boldmath{$\mathrm{KL}(P,\hat{P}_{\mathrm{W}}),n=30$} \\ \hline
0 &0.455 $\pm\ 0.395$&1.318 $\pm\ 0.006$& 4.875 $\pm\ 0.035$ \\
0.001 &0.429 $\pm\ 0.396$&1.318 $\pm\ 0.004$& 4.887 $\pm \ 0.036$ \\
0.01 &0.434 $\pm\ 0.270$&1.344 $\pm\ 0.006$& 4.551 $\pm \ 0.164$ \\
0.025 &0.780 $\pm\ 0.223$&1.456 $\pm\ 0.064$& 5.710 $\pm \ 0.536$ \\
0.005 &1.047 $\pm\ 0.029$&1.537 $\pm\ 0.064$& 7.570 $\pm \ 0.772$ \\ \hline
\end{tabular*}
\label{tab:bary}
\end{table}

\vspace{-12pt}
\begin{table}[H]
\caption{Average prediction error of the entropy-regularized barycenter with the 95\% confidential interval (random sample of size 120).}
\begin{tabular*}{\hsize}{@{}@{\extracolsep{\fill}}lcccc@{}}
\hline
\boldmath{$\lambda$}&\boldmath{$\mathrm{KL}(P,\hat{P}_{\mathrm{W}}),n=5$}&\boldmath{$\mathrm{KL}(P,\hat{P}_{\mathrm{W}}),n=15$}&\boldmath{$\mathrm{KL}(P,\hat{P}_{\mathrm{W}}),n=30$} \\ \hline
0 &0.154 $\pm\ 0.600$&1.303 $\pm\ 0.010$& 5.091 $\pm\ 0.035$ \\
0.001 &0.212 $\pm\ 0.070$&1.305 $\pm\ 0.010$& 5.072 $\pm \ 0.037$ \\
0.01 &0.306 $\pm\ 0.046$&1.328 $\pm\ 0.008$& 5.274 $\pm \ 0.252$ \\
0.025 &0.671 $\pm\ 0.028$&1.337 $\pm\ 0.073$& 5.851 $\pm \ 0.424$ \\
0.005 &1.109 $\pm\ 0.063$&1.603 $\pm\ 0.184$& 8.072 $\pm \ 0.725$ \\ \hline
\end{tabular*}
\label{tab:bary2}
\end{table}

\subsection{Gradient Descent on $\mathrm{Sym}_{+}(n)$}
\label{Section6.3}
We use a gradient descent method to compute the entropy-regularized barycenter. Applying the gradient descent method to the loss function defined by the Wasserstein distance was proposed in \cite{elipap}. This idea is extendable to entropy-regularized optimal transport. The detailed algorithm is shown below.
Because $C_{\lambda}(P,Q)$ is a function of a positive definite matrix, we used a manifold gradient descent algorithm on the manifold of positive definite matrices.

We review the manifold gradient descent algorithm used in our numerical experiment.
Let $\mathrm{Sym}_{+}(n)$ be the manifold of $n$-dimensional positive definite matrices. We require a formula for a gradient operator and the inner product of $\mathrm{Sym}_{+}(n)$ in the gradient descent algorithm. In this paper, we use the following inner product from \cite{posi}, Chapter 6. 
For a fixed $X \in \mathrm{int}(\mathrm{Sym}_{+}(n))$, we define an inner product of $\mathrm{Sym}_{+}(n)$ as:
\begin{equation}
\label{inner}
g_{X}(Y, Z)=\operatorname{tr}\left(Y X^{-1} Z X^{-1}\right), \ Y, Z \in \mathrm{Sym}_{+}(n), 
\end{equation}

Equation \eqref{inner} is the best choice in terms of the convergence speed according to \cite{inner}.
Let $f: \mathrm{Sym}_{+}(n)\rightarrow \mathbb{R}$ be a differential matrix function. Then, the induced gradient of $f$ under \eqref{inner} is: 
\begin{equation}
\operatorname{grad}f(X)=X \left(\frac{\partial f(X)}{\partial X} \right)X.
\end{equation}

We consider the updating step after obtaining the gradient of $f$. $\mathrm{grad}f(X)$ is an element of the tangent space, and we have to project it to $\mathrm{Sym}_{+}(n)$. This projection map is called a retraction. It is known that the Riemannian metric $g_{X}$ leads to the following retraction:

\noindent $\exp _{X} x=X\operatorname{Exp}\left(X^{-1}x \right)$, where $\mathrm{Exp}$ is the matrix exponential. Then, the corresponding gradient descent method becomes as shown in Algorithm \ref{alg:grad}.

\subsection{Approximate the Matrix Square Root}
\label{Approximate the Matrix Square Root}
To compute the gradient of the square root of a matrix in the objective function, we~approximate it using the Newton--Schulz method \cite{newton}, which can be implemented by matrix operations as shown in Algorithm \ref{alg:newton}.
It is amenable to automatic differentiation, such that we can easily apply the gradient descent method to our algorithm.

\begin{algorithm}[H] 
\caption{Gradient descent on the manifold of positive definite matrices.}
\label{alg:grad} 
\begin{algorithmic} 
\REQUIRE $f(X)$
\STATE\textbf{initialize} $X$
\WHILE { no convergence}
\STATE $\eta:\ $step size
\STATE $\mathrm{grad}\leftarrow X \left(\frac{\partial f(X)}{\partial X} \right)X$
\STATE $X\leftarrow \exp_{X}(\eta\mathrm{grad})=X\mathrm{Exp}(\eta X^{-1}\mathrm{grad})$
\ENDWHILE
\ENSURE $X$
\end{algorithmic}
\end{algorithm}

\vspace{-10pt}

\begin{algorithm}[H] 
\caption{Newton--Schulz method.} 
\label{alg:newton} 
\begin{algorithmic} 
\REQUIRE $A\in\mathrm{Sym}_{+}(n),\epsilon>0$
\STATE$Y\leftarrow\frac{A}{(1+\epsilon)\|A\|},\ \ Z\leftarrow I$
\WHILE { no convergence}
\STATE $T\leftarrow(3I-ZY)/2$
\STATE $Y\leftarrow YT,\ Z\leftarrow TZ$
\ENDWHILE
\ENSURE $\sqrt{(1+\epsilon)\|A\|}Y$
\end{algorithmic}
\end{algorithm}

\section{Conclusions and Future Work}
In this paper, we studied entropy-regularized optimal transport and derived several result. We summarize these as follows and add notes on future work.
\begin{itemize}
\item We obtain the explicit form of entropy-regularized optimal transport between two multivariate normal distributions and derived Corollaries \ref{cor:mono} and \ref{cor:1}, which clarified the properties of optimal coupling. Furthermore, we demonstrate experimentally how entropy regularization affects the Wasserstein distance, the optimal coupling, and~the geometric structure of multivariate normal distributions. Overall, the properties of optimal coupling were revealed both theoretically and experimentally. We~expect that the explicit formula can be a replacement for the existing methodology using the (nonregularized) Wasserstein distance between normal distributions (for example,~\mbox{\cite{elipap,inception}}).

\item Theorem \ref{thm:tsallis} derives the explicit form of the optimal coupling of the Tsallis entropy-regularized optimal transport between multivariate $q$-normal distributions. The~optimal coupling of the Tsallis entropy-regularized optimal transport between multivariate $q$-normal distributions is also a multivariate $q$-normal distribution, and the obtained result has an analogy to that of the normal distribution. We believe that this result can be extended to other elliptical distribution families.

\item The entropy-regularized Kantorovich estimator of a probability measure in $\mathcal{P}_2(\mathbb{R})$ is the convolution of a multivariate normal distribution and its own density function. Our experiments show that both the entropy-regularized Kantorovich estimator and the Wasserstein barycenter of multivariate normal distributions outperform the maximum likelihood estimator in the prediction error for adequately selected $\lambda$ in a high dimensionality and small sample setting. As future work, we want to show the efficiency of entropy regularization using real data.
\end{itemize}
\section{acknowledgements}
This work was supported by RIKEN AIP and JSPS KAKENHI (JP19K03642, JP19K00912).
\bibliography{cite} 
\bibliographystyle{plain} 

\end{document}